\documentclass[11pt]{article}

\usepackage[margin=1in]{geometry}
\usepackage[T1]{fontenc}
\usepackage[utf8]{inputenc}
\usepackage{lmodern}
\usepackage{amsmath,amssymb,amsthm,mathtools}
\usepackage{empheq}
\usepackage{bm}
\usepackage{microtype}
\usepackage{hyperref}

\numberwithin{equation}{section}

\newtheorem{theorem}{Theorem}[section]
\newtheorem{lemma}[theorem]{Lemma}
\newtheorem{proposition}[theorem]{Proposition}
\newtheorem{corollary}[theorem]{Corollary}

\theoremstyle{definition}
\newtheorem{definition}[theorem]{Definition}

\theoremstyle{remark}

\title{Spinorial Div-Curl Structure and Bilinear Null-Form Estimates for Dirac Equations}

\author{ 
   Jiong-Yue Li \thanks{School of Mathematics, Sun Yat-sen University, Guangzhou, China. 
  Email: \texttt{lijiongyue@mail.sysu.edu.cn}}
}

\date{}

\begin{document}

\maketitle

\begin{abstract}
We identify a physical-space balance-law mechanism underlying bilinear 
null forms for the free Dirac equation in three space dimensions. For each 
spatial direction, we decompose a spinor into the two eigenspaces of the 
directional Dirac symbol. The principal parts of the corresponding modes 
propagate in opposite directions, while the transverse derivatives and the 
mass term couple them. After integration over the transverse variables, 
their charge densities satisfy a pair of one-dimensional balance laws, and 
a div--curl interaction estimate controls the mixed product of these 
densities. The algebraic anticommutation condition defining the spinorial 
null form exchanges exactly the same two eigenspaces. This identifies the 
algebraic cancellation with the interaction selected by the balance laws. 
Combined with angular localization, the argument yields frequency-localized 
$L^2$ spacetime estimates while preserving the natural first-order 
formulation of the Dirac equation. In the massless case, the estimate has 
the same lower-frequency scaling as the three-dimensional wave null-form 
estimate and gains half a derivative over the direct product bound. In the 
massive case, we obtain a channel-dependent refinement. For the 
pseudoscalar interaction, anticommutation with the full massive Dirac 
Hamiltonian gives an additional frequency-to-mass factor $K/m$ for 
low-frequency interactions within the same energy branch, where $K$ is 
the frequency scale and $m$ is the mass. This gain is absent in the scalar 
channel. The bilinear estimates also yield factorized bounds for cubic 
spinorial null forms arising in nonlinear Dirac models.
\end{abstract}

\medskip
\noindent\textbf{Keywords.}
Dirac equation; div-curl lemma; null structure; bilinear spacetime estimates.

% Add the appropriate 2020 Mathematics Subject Classification codes here.
\medskip
 \noindent\textbf{Mathematics Subject Classification (2020).}
 Primary 35Q41; Secondary 35L40, 35B45.

\section{Introduction}
The div--curl principle is a basic mechanism for extracting hidden
cancellation from differential constraints. It originated in the theory of
compensated compactness developed by Murat and Tartar
\cite{Murat1978,Tartar1979}. In its classical form, the div--curl lemma shows that,
although two bounded sequences may converge only weakly, the product of a
divergence-controlled sequence and a curl-controlled sequence can still pass
to the limit. Coifman, Lions, Meyer and Semmes later obtained an endpoint
version in harmonic analysis, showing that the product of an $L^2$
divergence-free vector field and an $L^2$ curl-free vector field belongs to
the Hardy space $\mathcal H^1$ \cite{CLMS1993}. These results demonstrate that
differential constraints may compensate for the lack of strong convergence
or integrability.

A closely related cancellation appears in nonlinear wave equations through
null forms. The standard null forms vanish when two waves propagate in the
same null direction, and hence their products satisfy better spacetime
estimates than general quadratic terms. Klainerman and Machedon proved the
fundamental $L^2_{t,x}$ estimates for wave null forms using Fourier analysis
and the geometry of the light cone \cite{KlainermanMachedon}. Klainerman,
Rodnianski and Tao subsequently developed a physical-space approach based on
wave packets, characteristic directions and induction on scales
\cite{KRT}. More recently, Wang and Zhou introduced a quantitative
div--curl type estimate for a pair of one-dimensional balance laws
\cite{Zhou}\cite{WangZhou}. Their argument is based on a simple interaction functional
and converts conservation or balance laws directly into bilinear spacetime
bounds. In particular, this method provides a short physical-space proof of
frequency-localized wave null-form estimates and has also been applied to
several nonlinear wave and dispersive equations \cite{WangZhou-Schordinger,LaiShaoZhou,ZhangZhou-1,
ShaoZhou,WangZhou-Hypersurface,TuZhou,HuTuZhou,ZhangZhou-2}.

Null structures also play a central role in the analysis of nonlinear Dirac
equations. They have been identified and used in the Dirac-Klein-Gordon
system and in several cubic Dirac models
\cite{DAFS,BejenaruHerr,BournaveasCandy,KatayamaKubo}\cite{WangXuecheng}\cite{LiZang-1,LiZang-2}. Most previous
approaches describe the cancellation through the spectral projections of
the Dirac operator and then use angular decompositions, bilinear Fourier
restriction estimates, null-frame spaces or atomic function spaces. These
methods are very effective, but the connection between the spinorial
algebra and a div--curl interaction in physical space is less explicit.

The purpose of this paper is to give such a connection. We develop a spinorial div-curl mechnism 
for the free Dirac equation in $1+3$ dimensions. By decomposing the Dirac charge current into the two eigenspaces of the directional symbol $\alpha\cdot\omega$ and integrating out the transverse variables, we derive a pair of one-dimensional balance laws for oppositely propagating spinorial modes. The associated div-curl interaction controls a class of Dirac null forms characterized by the anticommutation relation $\{\Gamma, \alpha\cdot\omega\}=0$, yielding frequency-localized $L_{t,x}^{2}$ bilinear estimates with the same low-frequency gain as the three-dimensional wave null-form estimate. We further distinguish a genuinely massive null structure: for $\Gamma=\beta\gamma^{5}$, which anticommutes with the full massive Dirac-Hamiltonian, low-frequency same-branch interactions gain an additional factor $K/m$. The bilinear estimates also yield factorized bounds for cubic spinorial null forms arising in nonlinear Dirac models \cite{Soler1970}\cite{Thirring1958}.

\medskip

Let $\Phi:\mathbb{R}^{1+3}\rightarrow\mathbb{C}^{4}$ solve the Dirac equation
\begin{equation}\label{Dirac-1}
\begin{cases}
   -i \gamma^{\mu}\partial_{\mu}\Phi+m\Phi=0\\
    \Phi(0,x)=\Phi_{0},
\end{cases}    
\end{equation}
where $m$ denotes the mass of the particle. The standard gamma matrices are given by
\[  \gamma^{0}=\begin{pmatrix}
                               I_{2}   & 0 \\
                               0       &  -I_{2}
                          \end{pmatrix}, \qquad  \gamma^{j}=\begin{pmatrix}
                                                                                      0  &    \sigma^{j}\\
                                                                                      -\sigma^{j}  &  0
                                                                                      \end{pmatrix},
\]
where $\sigma^{j}$ $(j=1,2,3)$ denotes the Pauli matrices
\[
     \sigma^{1}=\begin{pmatrix}
                         0 & 1\\
                         1 & 0
                         \end{pmatrix},\qquad 
      \sigma^{2}=\begin{pmatrix}
                         0 & -i\\
                         i & 0
                         \end{pmatrix},\qquad 
        \sigma^{3}=\begin{pmatrix}
                         1 & 0\\
                         0 & -1
                         \end{pmatrix}.
\]
Gamma matrices satisfy the Clifford relation $\gamma^{\mu}\gamma^{\nu}+\gamma^{\nu}\gamma^{\mu}=-2\eta^{\mu\nu}I$, $(\mu,\nu=0,1,2,3)$. Notice that $\gamma^{0}\gamma^{0}=I$ and $\gamma^{j}\gamma^{j}=-I$. With $\beta=\gamma^{0}$, $\alpha^{j}=\gamma^{0}\gamma^{j}$, and $D=-i\nabla$, the equation can be written in Hamiltonian form as 
\begin{equation*}
   i\partial_{t}\Phi=(\alpha\cdot D)\Phi+m\beta\Phi.
\end{equation*}
Suppose $H_{m}(D)=\alpha\cdot D+m\beta$. We denote its propagator by $S_{m}(t)=e^{-itH_{m}(D)}$ and write $\Phi_{N}(t,x)=S_{m}(t)P_{N}\Phi_{0}(x)$ for a dyadic frequency component.

For $\omega\in\mathbb{S}^2$, define the massless spinor projections
\begin{equation*}
   P_{\omega}^{\pm}=\frac{1}{2}(I\pm \alpha\cdot\omega)
\end{equation*}
They are the spectral projections of $\alpha\cdot\xi$ when $\xi=|\xi|\omega$. If a matrix $\Gamma$ satisfies 
\begin{equation*}
\Gamma(\alpha\cdot\omega)=-(\alpha\cdot\omega)\Gamma
\end{equation*}
 for every $\omega\in\mathbb{S}^{2}$, then the bilinear scaler form $ B_{\Gamma}(z,w):=\langle \Gamma z, w\rangle_{\mathbb{C}^4}$ annihilates equal polarizations, i.e., $B_{\Gamma}(P_{\omega}^{\pm}z, P_{\omega}^{\pm}w)=0$. Thus
the Dirac null form
\begin{equation*}
    B_{\Gamma}(z,w)=B_{\Gamma}(P_{\omega}^{+}z, P_{\omega}^{-}w)+B_{\Gamma}(P_{\omega}^{-}z, P_{\omega}^{+}w)
\end{equation*}
We refer this property as the Dirac null structure. In particular, $\Gamma=\beta$ and $\Gamma=\beta\gamma^{5}$ generate the two basic quadratic spinor null forms considered in this paper.

\medskip
 
 The key point is that this algebraic decomposition matches the transport structure used in the spinorial div-curl lemma. Writing $\Phi=(\phi_1,\phi_2, \phi_3, \phi_4)^{T}$ and setting 
 \begin{equation*}
    U_{\Phi}=\begin{pmatrix}
                       \phi_{1}+\phi_{4}\\
                       \phi_{2}+\phi_{3}
                    \end{pmatrix}, \quad V_{\Phi}=\begin{pmatrix}
                                                                        \phi_{1}-\phi_{4}\\
                                                                        \phi_{2}-\phi_{3}
                                                                     \end{pmatrix},
 \end{equation*}
The Dirac equation becomes a pair of transport equations in the $x_{1}$ direction, which means the principal part transports $U_{\Phi}$ and $V_{\Phi}$ in opposite $x_1$-directions, while the transverse derivatives and the mass term couple the two modes. Integrating their squared norms over $y=(x_{2}, x_{3})$ gives two one-dimensional balance laws. The coupling term appears with opposite signs, so it only transfers charge between the two modes and cancels in the equation for their total density. The one-dimensional div-curl estimate then controls the mixed interaction $e(U_{\Phi})e(V_{\Psi})+e(V_{\Phi})e(U_{\Psi})$.
On the other hand, for the direction $\omega=e_{1}=(1,0,0)$, the projections $P_{\omega}^{+}$ and $P_{\omega}^{-}$ are, up to fixed isometries, exactly the $U$ and $V$ modes. If $\Gamma$ anticommutes with $\alpha\cdot\omega$, it exchanges these two eigenspaces, and the equal-polarization interaction vanish. 
Consequently, 
\begin{equation*}
    \Big| B_{\Gamma}(\Phi, \Psi) \Big|\lesssim_{\Gamma} |U_{\Phi}||V_{\Psi}|+|V_{\Phi}||U_{\Psi}|
\end{equation*}
Thus the algebraic null forms is precisely the mixed-mode interaction controlled by the balance laws. Rotation and angular localization then yield the dyadic bilinear estimate.This provides a direct link between
spinorial cancellation, conservation laws and bilinear spacetime integrability.

\medskip

A further feature appears in the massive problem. Let $\gamma^{5}=i\gamma^{0}\gamma^{1}\gamma^{2}\gamma^{3}$. The matrix
\[
  \Gamma_5:=\beta\gamma^5
\]
anticommutes not only with the spatial matrices $\alpha^j$, but also with
$\beta$. It therefore anticommutes with the full massive Dirac Hamiltonian $H_{m}(D)$.
In Fourier space, $H_{m}(\xi)$ has eigenvalues $\pm\lambda_{m}(\xi)$, whereas the normalized symbol $H_{m}(\xi)/\lambda_{m}(\xi)$ has eigenvalues $\pm1$.
For same-sign energy components at frequencies $K\ll m$, this stronger
algebraic property produces an additional factor $K/m$. This distinguishes
the pseudoscalar channel $\Gamma=\Gamma_5$ from the scalar channel $\Gamma=\beta$,
for which the mass term leaves a non-vanishing same-branch interaction.

\medskip

We now state the main estimates. Throughout, we write
\begin{equation*}
 L^{2}_{t,x}:=L^{2}([0,T]\times\mathbb{R}^{3})
\end{equation*}
For dyadic frequencies $(N_{1}, N_{2})$, define
\begin{equation*}
  \mathcal{C}_{m,T}(N_{1}, N_{2}):=\min\left\{ N_{*}\sqrt{2+mT},\  N_{*}^{3/2}T^{1/2}\right\},\quad
  N_{*}:=\min\{N_{1}, N_{2}\}. 
 \end{equation*}

\medskip

\begin{theorem}[Frequency-localized null-form estimates]
Let $m\geq 0$, and let $\Gamma\in\mathbb{C}^{4\times4}$ satisfy 
\begin{equation*}
\{\Gamma, \alpha\cdot\omega\}=0\  \ \text{for every} \ \ \omega\in\mathbb{S}^{2}.
\end{equation*}
Then any dyadic components $\Phi_{N_1}$ and $\Psi_{N_2}$ of free Dirac 
 solutions satisfy 
 \begin{equation*}
   \bigl\| B_{\Gamma}(\Phi_{N_1}, \Psi_{N_2})\bigr\|_{L^2_{t,x}} \lesssim \mathcal{C}_{m,T}(N_1, N_2)\
   \|\Phi_{N_1}(0)\|_{L^2_x}\|\Psi_{N_2}(0)\|_{L^2_{x}}.
 \end{equation*}
 Moreover, suppose that $\Gamma_1$ and $\Gamma_{2}$ satisfy the same anticommutation condition, and let $\Phi^{j}_{N_j}$, $j=0,1,2,3$, be dyadic components of the free Dirac solutions. Then
 \begin{equation*}
 \begin{aligned}
   &\left| \int_0^T\int_{\mathbb{R}^{3}}\left\langle B_{\Gamma_{1}}\big(\Phi^{1}_{N_1}, \Phi^2_{N_2}\big)\Gamma_2\Phi^3_{N_3},\Phi^0_{N_0}\right\rangle_{\mathbb{C}^{4}}dxdt\right|\\
   &\qquad \lesssim_{\Gamma_{1}, \Gamma_{2}}\mathcal{C}_{m,T}(N_1, N_2)\mathcal{C}_{m,T}(N_{3},N_{0})\prod_{j=0}^{3}\|\Phi^{j}_{N_j}(0)\|_{L^2_x}.
 \end{aligned}
\end{equation*}
\end{theorem}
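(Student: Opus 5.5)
The plan is to prove the bilinear bound in two regimes, corresponding to the two entries of $\mathcal C_{m,T}$, and then deduce the quadrilinear bound by Cauchy--Schwarz.

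\emph{The bound $N_*^{3/2}T^{1/2}$.} This entry is the trivial one and uses no null structure. Assume $N_1\le N_2$, so $N_*=N_1$. Bernstein's inequality and charge conservation $\|S_m(t)f\|_{L^2}=\|f\|_{L^2}$ give
\[
\|B_\Gamma(\Phi_{N_1},\Psi_{N_2})(t)\|_{L^2_x}\lesssim \|\Phi_{N_1}(t)\|_{L^\infty}\|\Psi_{N_2}(t)\|_{L^2}\lesssim N_1^{3/2}\|\Phi_{N_1}(0)\|_{L^2}\|\Psi_{N_2}(0)\|_{L^2}.
\]
Integrating over $[0,T]$ produces the factor $T^{1/2}$.

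\emph{The bound $N_*\sqrt{2+mT}$: the div--curl step in a fixed direction.} First fix the direction $\omega=e_1$. Write the Dirac system in the $U,V$ variables of the introduction, so that
\[
(\partial_t+\partial_1)U=\mathcal L V,\qquad (\partial_t-\partial_1)V=-\mathcal L^{*}U,
\]
where $\mathcal L$ is built from the transverse derivatives $\partial_2,\partial_3$ and the mass term. Set $\rho_U(t,x_1)=\int|U|^2\,dy$ and $\rho_V(t,x_1)=\int|V|^2\,dy$. These satisfy
\[
\partial_t\rho_U+\partial_1\rho_U=2\,\mathrm{Re}\int\langle\mathcal L V,U\rangle\,dy,\qquad \partial_t\rho_V-\partial_1\rho_V=-2\,\mathrm{Re}\int\langle\mathcal L V,U\rangle\,dy,
\]
with the same coupling term $g$ appearing with opposite signs.

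Apply this to $\Phi$ and $\Psi$ and use the Wang--Zhou interaction functional
\[
I(t)=\iint_{x_1<z_1}\rho_U^\Phi(x_1)\,\rho_V^\Psi(z_1)\,dx_1\,dz_1
\]
together with the symmetric functional with the roles of $U$ and $V$ exchanged. Differentiating in $t$ gives the mixed product $\int\rho_U^\Phi\rho_V^\Psi\,dx_1$ plus terms of the form $\int|g|\cdot\|\cdot\|_{L^2}^2$. The functional itself is bounded by $\|\Phi(0)\|_{L^2}^2\|\Psi(0)\|_{L^2}^2$.

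The coupling $g$ must be bounded using only the total charge; pointwise bounds on $g$ are not available. My plan is to work with the total densities $\rho_U+\rho_V$, which satisfy a conservation law. The div--curl lemma then controls
\[
\int_0^T\!\!\int\big(\rho_U^\Phi\rho_V^\Psi+\rho_V^\Phi\rho_U^\Psi\big)\,dx_1\,dt\lesssim (2+mT)\,\|\Phi(0)\|_{L^2}^2\|\Psi(0)\|_{L^2}^2 .
\]
The term $2$ comes from the boundary values of the functional. The term $mT$ comes from the mass part of the coupling, which is bounded in time by $m\|\cdot\|^2$ per unit time. The transverse-derivative part of the coupling has to be absorbed by angular localization; this is the main obstacle.

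\emph{Angular localization and reassembly.} Localize both solutions in frequency to angular caps of size $\theta$ around directions $\omega$. After rotating $\omega$ to $e_1$, the transverse derivatives are of size $N\theta$. The pointwise null inequality $|B_\Gamma|\lesssim |U_\Phi||V_\Psi|+|V_\Phi||U_\Psi|$ reduces $\|B_\Gamma\|^2_{L^2_{t,x}}$ to a product of the one-dimensional densities. Recovering the $y$-integration at frequency $N_*$ via Bernstein in $y$ costs $N_*^2$, giving the bound $N_*^2(2+mT)$ for the square.

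The hardest step is to choose $\theta$ so that the transverse coupling is harmless, for example $\theta\sim 1$ with a fixed finite cover of the sphere, and to sum over caps. For the sum I would use almost-orthogonality of the outputs (Whitney decomposition in relative angle) and Cauchy--Schwarz over caps, so that the $\ell^2$-sum reproduces $\|\Phi(0)\|\,\|\Psi(0)\|$. Even if the cap-wise coupling estimate is only approximate, this is the step where I would concentrate the work.

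\emph{Cubic estimate.} Write the integrand as $\langle B_{\Gamma_1}(\Phi^1,\Phi^2),\ \overline{B_{\Gamma_2^{*}}(\Phi^3,\Phi^0)}\rangle$, up to conjugation, noting that $\Gamma_2^*$ also anticommutes with $\alpha\cdot\omega$ because the $\alpha^j$ are Hermitian. Then
\[
\int_0^T\!\!\int B_{\Gamma_1}(\Phi^1,\Phi^2)\,\overline{\langle\Gamma_2^{*}\Phi^0,\Phi^3\rangle}\,dx\,dt
\]
is bounded by Cauchy--Schwarz in $L^2_{t,x}$, followed by two applications of the bilinear estimate.
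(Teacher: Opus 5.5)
Your trivial bound $N_*^{3/2}T^{1/2}$, the balance laws for the transverse densities, and the cubic step agree with the paper. The cubic step is Cauchy--Schwarz after factorizing, and $\Gamma_2^*$ still anticommutes with $\alpha\cdot\omega$. The gap is in the angular localization, which is exactly where the gain $N_*\sqrt{2+mT}$ has to come from. Your proposed choice $\theta\sim 1$ does not work. With caps of width $\theta\sim 1$ the transverse frequencies are of size $N_*$, so the coupling term only satisfies $\int_0^T\!\int_{\mathbb{R}^3}|D_{y,m}V_\Phi||U_\Phi|\,dx\,dt\lesssim (N_*+m)T\|\Phi(0)\|_{L^2}^2$. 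The div--curl step then gives the factor $2+mT+N_*T$ instead of $2+mT$, and after Bernstein you get $N_*\sqrt{2+mT+N_*T}$. Since $N_*\sqrt{2+N_*T}\ge N_*^{3/2}T^{1/2}$ always, for $m=0$ the argument yields nothing beyond the direct product bound. Your remark that Bernstein in $y$ ``costs $N_*^2$'' hides the mechanism: on a cap of width $\theta$ it costs only $(N_*\theta)^2$.

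The missing idea is that this Bernstein gain exactly offsets the loss from summing over caps, so the cap size can be taken as small as the time scale requires. Assume $N_1\le N_2$. The paper localizes only $\Phi_{N_1}$, into $\lesssim (N_1/\rho)^2$ caps of angular width $\rho/N_1$, where $\rho=\min\{N_1,T^{-1}\}$. After rotating a cap's centre to $e_1$ one has $|\xi_y|\lesssim\rho$, and three things follow:
\begin{itemize}
\item $\|D_{y,m}V\|_{L^2_y}\le(\rho+m)\|V\|_{L^2_y}$;
\item the source contributes $(\rho+m)T\le 1+mT$, because $\rho T\le 1$;
\item Bernstein in $y$ costs only $\rho^2$.
\end{itemize}
This gives $\rho\sqrt{2+mT}\,\|\Phi_{N_1,\kappa}(0)\|_{L^2}\|\Psi_{N_2}(0)\|_{L^2}$ per cap. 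The triangle inequality plus Cauchy--Schwarz over caps then loses exactly $N_1/\rho$, producing $N_1\sqrt{2+mT}$. No almost-orthogonality or Whitney decomposition is needed.

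Equally important, $\Psi_{N_2}$ is neither localized nor run through its own balance law. Your functional $\iint_{x_1<z_1}\rho_U^\Phi\rho_V^\Psi$ would also generate the coupling term of $\Psi_{N_2}$. Its transverse frequencies are of size $N_2\theta$, and no Bernstein gain is available for that factor. The paper instead pairs $\partial_t e(U_\Phi)+\partial_1 e(U_\Phi)=-F_\Phi$ with the conservation law $\partial_tE_\Psi+\partial_1M_\Psi=0$, using $E_\Psi-M_\Psi=2e(V_\Psi)$. It pairs the $V_\Phi$ law with the same conservation law, using $E_\Psi+M_\Psi=2e(U_\Psi)$. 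Thus only $F_\Phi$ appears, and $\Psi_{N_2}$ is controlled by its charge alone.
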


\begin{theorem}[Low-frequency refinement in the pseudoscalar channel]
Assume $m>0$ and $\Gamma_{5}=\beta\gamma^{5}$. For $s\in\{+,-\}$, let $\Pi_{s}^{m}(D)$ denote the spectral projection of the massive Dirac Hamiltonian $H_{m}(D)$ onto the energy branch $s$, and set
\begin{equation*}
\Phi_{K,s}:=S_{m}(t)P_{K}\Pi_{s}^{m}(D)\Phi_0,\qquad \Psi_{K,s}:=S_{m}(t)P_{K}\Pi_{s}^{m}(D)\Psi_0.
\end{equation*}
Then,  in the low-frequency regime $K\ll m$,
\begin{equation*}
  \big\| B_{\Gamma_5}(\Phi_{K,s}, \Psi_{K,s}) \big\|_{L^2_{t,x}}\lesssim \min\left\{ K\sqrt{2+mT}, \ T^{1/2}\frac{K^{5/2}}{m}\right\}
  \|\Phi_{K,s}(0)\|_{L^2_x}\|\Psi_{K,s}(0)\|_{L^2_x}.
\end{equation*}
Thus, the same-branch interaction gains an additional factor $K/m$.
\end{theorem}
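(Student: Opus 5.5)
The first term of the minimum, $K\sqrt{2+mT}$, is the $N_*\sqrt{2+mT}$ branch of Theorem 1.1 with $N_1=N_2=K$. It applies because $\Gamma_5=\beta\gamma^5$ anticommutes with every $\alpha\cdot\omega$. So the new content is the bound $T^{1/2}K^{5/2}/m$. I would prove it by a fixed-time pointwise estimate followed by Bernstein and energy conservation, with no div--curl argument. The plan is to show that, for same-branch data at frequency $K\ll m$, the bilinear form $B_{\Gamma_5}(\Phi_{K,s},\Psi_{K,s})$ is, up to a harmless remainder, a form whose symbol carries a factor $|\xi|/m$ on one of the inputs. The desired bound then follows from
\[
\|fg\|_{L^2_{t,x}}\le T^{1/2}\sup_t\|f\|_{L^\infty_x}\|g\|_{L^2_x},\qquad \|f\|_{L^\infty_x}\lesssim K^{3/2}\|f\|_{L^2_x}.
\]
Together these give $T^{1/2}K^{3/2}\cdot(K/m)$, which is the claimed rate.

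The algebraic step is as follows. Since $\{\Gamma_5,\alpha^j\}=\{\Gamma_5,\beta\}=0$, we have $\Gamma_5H_m(\xi)=-H_m(\xi)\Gamma_5$. Therefore $\Gamma_5$ maps the range of $\Pi^m_s(\xi)$ into the range of $\Pi^m_{-s}(\xi)$, so $\Pi^m_s(\xi)\Gamma_5\Pi^m_s(\xi)=0$. The bilinear form, however, pairs different frequencies $\xi$ and $\eta$:
\[
B_{\Gamma_5}(\Phi,\Psi)=\langle\Gamma_5\Phi,\Psi\rangle=\langle \Pi^m_s(\eta)\Gamma_5\Pi^m_s(\xi)\hat\Phi(\xi),\hat\Psi(\eta)\rangle
\]
after taking the Fourier transform in space. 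I therefore need to estimate the matrix symbol $M(\xi,\eta)=\Pi^m_s(\eta)\Gamma_5\Pi^m_s(\xi)$. Since $\Pi^m_s(\xi)=\tfrac12\bigl(I+sH_m(\xi)/\lambda_m(\xi)\bigr)$ and $H_m(\xi)/\lambda_m(\xi)=\beta+O(|\xi|/m)$ when $|\xi|\lesssim K\ll m$, both projections equal $\tfrac12(I+s\beta)+O(K/m)$. Because $\Gamma_5$ anticommutes with $\beta$, the leading term $\tfrac12(I+s\beta)\Gamma_5\tfrac12(I+s\beta)$ vanishes. Hence $M(\xi,\eta)=O(K/m)$, and more precisely
\[
M(\xi,\eta)=\tfrac{s}{2}\Bigl(\tfrac{\alpha\cdot\eta}{2m}\Gamma_5\tfrac{I+s\beta}{2}+\tfrac{I+s\beta}{2}\Gamma_5\tfrac{\alpha\cdot\xi}{2m}\Bigr)+O(K^2/m^2).
\]
The leading part is a sum of terms in which one input is hit by the multiplier $(\alpha\cdot D)/m$, which has norm $\lesssim K/m$ on frequency $K$. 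The remaining part is a smooth symbol of size $(K/m)^2$ on the dyadic annuli.

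The analytic step is the main obstacle, because a symbol bound on $M(\xi,\eta)$ is not automatically an $L^2_{t,x}$ bound for a bilinear operator. I would avoid this by not expanding in $\xi$ and $\eta$ jointly. Instead I use the exact identity
\[
\Gamma_5=\Pi_{-s}^m(D)\,\Gamma_5\,\Pi_s^m(D)+\Pi_{s}^m(D)\,\Gamma_5\,\Pi_{-s}^m(D),
\]
which holds as operators (a Fourier multiplier acting on one function). Then I write
\[
B_{\Gamma_5}(\Phi_{K,s},\Psi_{K,s})=\langle\Gamma_5\Phi_{K,s},\Psi_{K,s}\rangle=\langle \Pi^m_{-s}(D)\Gamma_5\Phi_{K,s},\Psi_{K,s}\rangle,
\]
and insert $\Pi^m_{-s}(D)=\Pi^m_{-s}(D)-\tfrac12(I-s\beta)+\tfrac12(I-s\beta)$ on the $\Psi$ side. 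The pointwise inner product then splits into two pieces:
\[
\bigl\langle \Gamma_5\Phi_{K,s},\bigl[\Pi^m_{-s}(D)-\tfrac12(I-s\beta)\bigr]^{*}\text{-type correction}\bigr\rangle
\]
and
\[
\bigl\langle \Gamma_5\Phi_{K,s},\tfrac12(I-s\beta)\Psi_{K,s}\bigr\rangle.
\]
The corrections $\Pi^m_{\pm s}(D)-\tfrac12(I\pm s\beta)$ are Fourier multipliers of size $K/m$ on frequency $K$, with symbols smooth at scale $K$, so they are bounded on $L^2$ and $L^\infty$ of $P_K$-localized functions with norm $\lesssim K/m$.

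Any term containing a correction is therefore bounded by $(K/m)\,T^{1/2}K^{3/2}\|\Phi(0)\|\|\Psi(0)\|$, using the Bernstein/Hölder bound above together with $L^2$ conservation of $S_m(t)$. In the remaining term, I replace $\Phi_{K,s}$ by $\tfrac12(I+s\beta)\Phi_{K,s}+\text{correction}$ and $\Psi_{K,s}$ similarly. The purely leading piece is
\[
\bigl\langle\Gamma_5\tfrac12(I+s\beta)\Phi,\tfrac12(I+s\beta)\Psi\bigr\rangle=\bigl\langle\tfrac12(I+s\beta)\Gamma_5\tfrac12(I+s\beta)\Phi,\Psi\bigr\rangle,
\]
using that $\beta$ is Hermitian. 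It vanishes identically because $\beta\Gamma_5=-\Gamma_5\beta$.

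Finally, I would check that the frequency localization is preserved by the multipliers and that the implicit constants are uniform in $s$. Taking the minimum with the Theorem 1.1 bound then gives the claim.
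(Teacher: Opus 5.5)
Your argument is correct in substance, and your treatment of the $K\sqrt{2+mT}$ branch matches the paper's. The $T^{1/2}K^{5/2}/m$ branch, however, places the cancellation differently.

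\textbf{The paper's route.} The paper keeps the two-frequency symbol $M(\xi,\eta)=\Pi^m_s(\eta)\Gamma_5\Pi^m_s(\xi)$ and rewrites it exactly as $\bigl(\Pi^m_s(\eta)-\Pi^m_s(\xi)\bigr)\Pi^m_{-s}(\xi)\Gamma_5$. It then computes $\|\Pi^m_s(\eta)-\Pi^m_s(\xi)\|^2=\frac12\bigl(1-\frac{\xi\cdot\eta+m^2}{\lambda_m(\xi)\lambda_m(\eta)}\bigr)$, which gives $\sup\|M\|\lesssim K/m$. Finally it bounds $|\widehat{B}(\zeta)|\le\sup\|M\|\int|\hat\Phi(\xi)||\hat\Psi(\xi+\zeta)|\,d\xi$ and applies Young's inequality with $\|\hat\Psi\|_{L^1}\lesssim K^{3/2}\|\hat\Psi\|_{L^2}$.

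So the obstacle you anticipate does not arise here. Since the full Bernstein loss $K^{3/2}$ is paid anyway, a uniform bound on the symbol already yields the fixed-time $L^2_x$ estimate.

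\textbf{Your route.} You compare each projection with its nonrelativistic limit $P=\frac12(I+s\beta)$. This is the expansion $\Pi^m_s(\xi)=\frac12(I+s\beta)+O(|\xi|/m)$, which the paper records in Section 5 but does not use in the proof. Writing $\Pi^m_s(D)=P+R$ with $\|R(\xi)\|\lesssim|\xi|/m$, your argument amounts to
\[
B_{\Gamma_5}(\Phi_{K,s},\Psi_{K,s})=\langle\Gamma_5P\Phi_{K,s},P\Psi_{K,s}\rangle+\langle\Gamma_5R\Phi_{K,s},\Psi_{K,s}\rangle+\langle\Gamma_5P\Phi_{K,s},R\Psi_{K,s}\rangle .
\]
The first term vanishes pointwise because $P\Gamma_5P=0$. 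The other two need only Plancherel for $R$ at each fixed time plus Bernstein; no $L^\infty$ bound for $R$ is required.

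\textbf{Comparison.} Your approach makes the cancellation a pointwise constant-matrix identity in physical space and reduces everything to linear multiplier bounds. It also displays $B_{\Gamma_5}$ as ordinary products with a $K/m$ multiplier on one factor. The paper's exact identity retains finer information. Since $\lambda_m\ge m$, its formula gives $\|\Pi^m_s(\eta)-\Pi^m_s(\xi)\|\le|\xi-\eta|/(2m)$. Thus the symbol vanishes on the diagonal and the gain really depends on the output frequency $|\xi-\eta|$. Your $O(K/m)$ remainder discards this, though Theorem 1.2 does not need it.

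\textbf{A step to restate.} You cannot insert the multiplier correction ``on the $\Psi$ side'' of a pointwise inner product. Only constant matrices such as $\beta$ move across $\langle\cdot,\cdot\rangle_{\mathbb{C}^4}$ pointwise. A Fourier multiplier can be transferred only after integrating in $x$. Transferring $\Pi^m_{-s}(D)$ itself would give $\Pi^m_{-s}(D)\Psi_{K,s}=0$, i.e.\ the false conclusion $B_{\Gamma_5}\equiv0$; what is true is only $\int B_{\Gamma_5}\,dx=0$.

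The correct split keeps the correction on $\Gamma_5\Phi_{K,s}$:
\[
\Pi^m_{-s}(D)\Gamma_5\Phi_{K,s}=\bigl[\Pi^m_{-s}(D)-Q\bigr]\Gamma_5\Phi_{K,s}+Q\Gamma_5\Phi_{K,s},\qquad Q=\tfrac12(I-s\beta).
\]
Only after this may the constant matrix $Q$ be moved onto $\Psi_{K,s}$. Since $\Gamma_5\Phi_{K,s}$ is still frequency-localized at $K$, your bound for that term goes through unchanged.
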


\medskip

The paper is organized as follows. In section 2, we derive the spinorial div-curl lemma from the component form of the Dirac equation. In section 3, we introduce the spinor projections and null forms and establish their precise relation to the $U-V$ decomposition. Section 4 proves the frequency-localized bilinear estimate and its cubic consequences. In section 5, we analyze the exact massive spectral projections and prove the refined same-branch estimate for $\Gamma_{5}=\beta\gamma^5$.
%\noindent\textbf{Sample theorem statement.}
%Assume the hypotheses of your problem are satisfied. Then insert the statement of your main theorem here.

\section{A spinorial div-curl lemma}
Throughout this section, $m\geq0$ and $\Phi=(\phi_1, \phi_2, \phi_3, \phi_4)^{T}$ denotes a smooth solution of the free Dirac eequation
\begin{equation*}
    i\partial_{t}\Phi=(\alpha\cdot D+m\beta)\Phi
\end{equation*}
In the standard Dirac representation fixed in Section 1, this equation reads 
\begin{empheq}[left=\empheqlbrace]{align}
   \partial_{t}\phi_{1}+\partial_{1}\phi_{4}-i\partial_{2}\phi_{4}+\partial_{3}\phi_{3}+im\phi_{1}&=0, \label{Dirac-2.1}\\
  \partial_{t}\phi_{2}+\partial_{1}\phi_{3}+i\partial_{2}\phi_{3}-\partial_{3}\phi_{4}+im\phi_{2}&=0, \label{Dirac-2.2}\\
  \partial_{t}\phi_{3}+\partial_{1}\phi_{2}-i\partial_{2}\phi_{2}+\partial_{3}\phi_{1}-im\phi_{3}&=0, \label{Dirac-2.3}\\
  \partial_{t}\phi_{4}+\partial_{1}\phi_{1}+i\partial_{2}\phi_{1}-\partial_{3}\phi_{2}-im\phi_{4}&=0.\label{Dirac-2.4}
\end{empheq}
Let $u_{1}=\phi_{1}+\phi_{4}$, $u_{2}=\phi_{2}+\phi_{3}$, $v_{1}=\phi_{1}-\phi_{4}$ and $v_{2}=\phi_{2}-\phi_{3}$.  Then we have
\begin{empheq}[left=\empheqlbrace]{align}
  (\partial_{t}+\partial_{1}) u_{1}&=-i\partial_{2}v_{1}+\partial_{3}v_{2}-imv_{1}, \label{Dirac-2.5}\\
  (\partial_{t}+\partial_{1}) u_{2}&=-\partial_{3}v_{1}+i\partial_{2}v_{2}-imv_{2}, \label{Dirac-2.6}\\
  (\partial_{t}-\partial_{1}) v_{1}&=i\partial_{2}u_{1}-\partial_{3}u_{2}-imu_{1}, \label{Dirac-2.7}\\
  (\partial_{t}-\partial_{1}) v_{2}&=\partial_{3}u_{1}-i\partial_{2}u_{2}-imu_{2}.\label{Dirac-2.8}
\end{empheq}
For any spinor field $\Phi=(\phi_1, \phi_2, \phi_3, \phi_4)^{T}$, set
\begin{equation*}
    U_{\Phi}=\begin{pmatrix}
                       u_1\\
                       u_2
                   \end{pmatrix}=\begin{pmatrix}
                                                      \phi_{1}+\phi_{4}\\
                                                     \phi_{2}+\phi_{3}
                                               \end{pmatrix}, \qquad   V_{\Phi}=\begin{pmatrix}
                                                                                                     v_1\\
                                                                                                     v_2
                                                                                           \end{pmatrix}=\begin{pmatrix}
                                                                                                                       \phi_{1}-\phi_{4}\\
                                                                                                                        \phi_{2}-\phi_{3}
                                                                                                                  \end{pmatrix}
\end{equation*}
Writing $x=(x_1, y)$, with $y=(x_2, x_3)$, define the transverse Dirac operator
\[
D_{y}:=\begin{pmatrix}
i\partial_{2} & -\partial_{3} \\
\partial_{3} & -i\partial_{2}
\end{pmatrix}
\]

\begin{lemma}
The operator $D_y$, with domain $H^{1}(\mathbb{R}^2; \mathbb{C}^2)$, is self-adjoint on $L^{2}(\mathbb{R}^{2}; \mathbb{C}^{2})$.
\end{lemma}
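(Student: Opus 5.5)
Self-adjointness will follow from diagonalizing $D_y$ with the Fourier transform in $y=(x_2,x_3)$, which turns it into a Hermitian matrix multiplier of linear growth. Writing $\eta=(\eta_2,\eta_3)$ for the dual variable and using $\partial_j\mapsto i\eta_j$, the symbol is
\[
\widehat{D_y}(\eta)=\begin{pmatrix} -\eta_2 & -i\eta_3\\ i\eta_3 & \eta_2\end{pmatrix}
=-\eta_2\sigma^3+\eta_3\sigma^2 .
\]
Since $\sigma^2$ and $\sigma^3$ are Hermitian and anticommute, this matrix is Hermitian and satisfies $\widehat{D_y}(\eta)^2=|\eta|^2 I_2$. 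Its eigenvalues are therefore $\pm|\eta|$, and in particular $|\widehat{D_y}(\eta)\hat f(\eta)|=|\eta||\hat f(\eta)|$ pointwise.

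By Plancherel's theorem, $D_y$ is unitarily equivalent to the multiplication operator $M\hat f=\widehat{D_y}(\eta)\hat f$. Its domain is $\{\hat f\in L^2:\ |\eta|\hat f\in L^2\}$, which is the Fourier image of $H^1(\mathbb{R}^2;\mathbb{C}^2)$ by the norm identity above. The proof thus reduces to a standard fact: multiplication by a measurable Hermitian-matrix-valued function $A(\eta)$, on its maximal domain, is self-adjoint.

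To prove that fact directly, I proceed in two steps.

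\textbf{Symmetry.} Pointwise Hermiticity gives $\langle Mf,g\rangle=\langle f,Mg\rangle$ for all $f,g$ in the domain. This can also be checked in physical space by integration by parts on $C_c^\infty$ and then extended by density in $H^1$.

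\textbf{Adjoint domain.} Suppose $g\in\mathrm{Dom}(M^*)$, so there is $h\in L^2$ with $\langle Mf,g\rangle=\langle f,h\rangle$ for all $f$ in the domain. Testing against $f$ compactly supported in $\eta$ shows $h=A(\eta)g$ almost everywhere. Hence $|\eta||g|=|h|\in L^2$, so $g$ lies in the domain and $M^*g=Mg$.

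An alternative route to the same conclusion is to show $\mathrm{Ran}(M\pm i)=L^2$. This holds because $(A(\eta)\pm i)^{-1}$ is bounded by $1$, using that the eigenvalues $\pm|\eta|$ are real.

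The only point needing care is identifying the domain: one must check that the maximal domain of the multiplier coincides exactly with $H^1$. That is where the identity $\widehat{D_y}^2=|\eta|^2I$ is used, since it makes the graph norm of $D_y$ equal to the $H^1$ norm, namely $\|f\|^2+\|D_yf\|^2=\|f\|_{H^1}^2$. The rest is routine.
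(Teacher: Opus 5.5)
Your proof is correct and follows the same route as the paper: Fourier transform in $y$, the Hermitian symbol $-\eta_2\sigma^3+\eta_3\sigma^2$ with square $|\eta|^2 I_2$, and self-adjointness of the resulting multiplier on $H^1$. The only difference is that you write out the details the paper leaves implicit, namely symmetry, the adjoint-domain argument, and the identification of the graph norm with the $H^1$ norm.
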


\begin{proof}
Under the Fourier transform in $y$, the operator $D_y$ has symbol 
\[
  s_y(\xi)=\begin{pmatrix}
                        -\xi_2 & -i\xi_3\\
                        i\xi_3 & \xi_2
                 \end{pmatrix}, \qquad  \xi=(\xi_2, \xi_3).
\]
Since 
\[ s^{*}_y(\xi)=s_y(\xi) \quad \text{and} \quad s^{2}_y(\xi)=|\xi|^2 I_2,
\]
 the corresponding Fourier multiplier is self-adjoint with domain $H^1(\mathbb{R}^2; \mathbb{C}^4)$.
\end{proof}

\medskip

Set $D_{y,m}:=D_y+imI_2$ and $D_{y,m}^{*}=D_y-imI_2$.
Adding and substracting (\ref{Dirac-2.5})--(\ref{Dirac-2.8}) gives the transport system
\begin{empheq}[left=\empheqlbrace]{align}
     & (\partial_{t}+\partial_{1})U_{\Phi}=-D_{y,m}V_{\Phi}, \label{Dirac-3.1}\\
      &(\partial_{t}-\partial_{1})V_{\Phi}=D_{y,m}^{*}U_{\Phi}. \label{Dirac-3.2}
\end{empheq} 
For a two-component field $W=W(t,x_1, y)$, define its transverse density by 
\begin{equation*}
   e(W)(t,x_1):=\int_{\mathbb{R}^2} |W(t,x_1,y)|^2 dy.
\end{equation*}
 We also set 
 \begin{equation*}
    F_{\Phi}(t,x_1):=2 \text{Re}\int_{\mathbb{R}^2}\left\langle D_{y,m} V_{\Phi}, U_{\Phi}\right\rangle_{\mathbb{C}^2}dy.
 \end{equation*}
 Taking the real part of the $L_y^2$ inner product of the first equation (\ref{Dirac-3.1}) with $U_{\Phi}$, and of the section equation (\ref{Dirac-3.2}) with $V_{\Phi}$, yields
\begin{empheq}[left=\empheqlbrace]{align}
     & \partial_{t} e(U_{\Phi})+\partial_{1} e(U_{\Phi})=-F_{\Phi}, \label{Dirac-3.3}\\
      &\partial_{t} e(V_{\Phi})-\partial_{1} e(V_{\Phi})=F_{\Phi}. \label{Dirac-3.4}
\end{empheq} 
 Here we used $\big\langle V_{\Phi}, D_{y,m}^{*}U_{\Phi}\big\rangle_{L_y^{2}}= \big\langle D_{y,m}V_{\Phi}, U_{\Phi}\big\rangle_{L_y^{2}}$.
 Define 
 \begin{equation*}
     E_{\Phi}:=e(U_{\Phi})+e(V_{\Phi}), \qquad M_{\Phi}:=e(U_{\Phi})-e(V_{\Phi}).
 \end{equation*}
 Adding equations (\ref{Dirac-3.3}) and (\ref{Dirac-3.4}), we obtain the one-dimensional conservation law
 \begin{equation}\label{conservation-law-1}
    \partial_t E_{\Phi}+\partial_1 M_{\Phi}=0.
 \end{equation}
 Moreover, 
 \begin{equation}\label{conservation-law-2}
     \int_{\mathbb{R}} E_{\Phi}(t,x_1) dx_1 =2\|\Phi(t)\|_{L_x^2}^2=2\|\Phi(0)\|_{L_x^2}^2
 \end{equation}

\medskip

We use the following one-dimensional div-curl estimate of Wang-Zhou \cite{WangZhou}.
\begin{lemma}[Wang-Zhou div-curl estimate]\label{Div-Curl}
   Suppose that  
   \[
       \begin{cases}
           \partial_{t} f^{11}+\partial_{x}f^{12}=G^{1},\\
           \partial_{t}f^{21}-\partial_{x}f^{22}=G^{2}.
       \end{cases}
   \]
  and $f^{11}, f^{12}, f^{21}, f^{22}\rightarrow 0$ as $|x|\rightarrow\infty$, where $(t,x)\in [0,T]\times\mathbb{R}$ and each $f^{ij}$ is a real-value function of $(t, x)$. Then we have
  \begin{equation}\label{div-curl}
  \begin{aligned}
     &\int_{0}^{T}\int_{-\infty}^{+\infty}f^{11}f^{22}+f^{12}f^{21}dxdt\lesssim
\Big(
\|f^{11}(0)\|_{L^1}
+\sup_{0\le t\le T}\|f^{11}(t)\|_{L^1} \\
&+\int_0^T \int_{-\infty}^{+\infty} |G^1|dxdt
\Big) 
\cdot
\Big(
\|f^{21}(0)\|_{L^1}
+\sup_{0\le t\le T}\|f^{21}(t)\|_{L^1}
+\int_0^T \int_{-\infty}^{+\infty} |G^2|dxdt
\Big).
\end{aligned}
  \end{equation}
 provided that the right hand side is bounded. 
\end{lemma}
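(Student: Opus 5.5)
The plan is to prove the estimate directly with an interaction functional of the kind Wang and Zhou use. Assume the right-hand side is finite and all $f^{ij}$ decay at spatial infinity; a standard approximation removes any extra regularity assumptions.

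Define
\[
I(t):=\int\!\!\int_{x<z} f^{11}(t,x)\,f^{21}(t,z)\,dx\,dz
=\int_{\mathbb{R}} f^{11}(t,x)\Big(\int_x^{\infty} f^{21}(t,z)\,dz\Big)dx .
\]
This functional is bounded by $|I(t)|\le \|f^{11}(t)\|_{L^1}\|f^{21}(t)\|_{L^1}$.

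Next I differentiate $I$ in time and substitute $\partial_t f^{11}=-\partial_x f^{12}+G^1$ and $\partial_t f^{21}=\partial_z f^{22}+G^2$. The transport parts are integrated by parts in the inner variable, using $\partial_x\int_{x<z}\cdot\,dz$ for the first and the fundamental theorem of calculus in $z$ for the second. The transport contributions then collapse to the diagonal $x=z$. Tracking signs:
\[
\int\!\!\int_{x<z}(-\partial_x f^{12}) f^{21}\,dx\,dz=-\int f^{12}(t,x)f^{21}(t,x)\,dx,
\]
since the inner integral in $x$ over $(-\infty,z)$ gives $-f^{12}(t,z)$. Similarly,
\[
\int\!\!\int_{x<z} f^{11}\,\partial_z f^{22}\,dz\,dx=-\int f^{11}(t,x)f^{22}(t,x)\,dx,
\]
because the inner integral over $(x,\infty)$ gives $-f^{22}(t,x)$. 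Both boundary terms at infinity vanish by the decay hypothesis. Hence
\[
\frac{d}{dt}I(t)=-\int_{\mathbb{R}}\big(f^{11}f^{22}+f^{12}f^{21}\big)dx+\int\!\!\int_{x<z}\big(G^1(x)f^{21}(z)+f^{11}(x)G^2(z)\big)\,dx\,dz .
\]

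Now I integrate over $[0,T]$, which gives
\[
\int_0^T\!\!\int (f^{11}f^{22}+f^{12}f^{21})\,dx\,dt
= I(0)-I(T)+\int_0^T\!\!\int\!\!\int_{x<z}(G^1f^{21}+f^{11}G^2) .
\]
Each term on the right is then bounded separately:
\begin{itemize}
\item $|I(0)|+|I(T)|\le \|f^{11}(0)\|_{L^1}\|f^{21}(0)\|_{L^1}+\sup_t\|f^{11}\|_{L^1}\sup_t\|f^{21}\|_{L^1}$;
\item the $G^1$ term is at most $\|G^1\|_{L^1_{t,x}}\sup_t\|f^{21}(t)\|_{L^1}$;
\item the $G^2$ term is at most $\sup_t\|f^{11}(t)\|_{L^1}\|G^2\|_{L^1_{t,x}}$.
\end{itemize}
Every one of these products is dominated by the product of the two brackets in \eqref{div-curl}. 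That gives the estimate with an absolute constant, and the left-hand side is in fact bounded in absolute value, not only from above.

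The only delicate point is justifying the differentiation and the integrations by parts. Decay alone does not guarantee that $f^{12}$ and $f^{22}$ are integrable, so the diagonal identities should be read as limits. I would handle this by replacing the sharp cutoff $\mathbf 1_{x<z}$ with a smooth cutoff and restricting to bounded spatial intervals $[-R,R]$, then letting $R\to\infty$ using the decay of $f^{ij}$. After that the regularization is removed, using the finiteness of the right-hand side and dominated convergence.
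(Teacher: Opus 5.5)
Your proof is correct: the functional $I(t)=\int f^{21}(t,z)\bigl(\int_{-\infty}^{z}f^{11}(t,x)\,dx\bigr)\,dz$, both diagonal identities, and the four bounds all check out, and they bound the left-hand side in absolute value with constant $1$. The paper gives no proof of its own and simply cites Wang--Zhou. Your argument (pairing $f^{21}$ against the primitive of $f^{11}$) is essentially the proof behind that citation; in the paper's application every density comes from Schwartz data, so your regularization step is not needed there.
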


\medskip

We now apply this estimate to the balance laws (\ref{Dirac-3.3}) (\ref{Dirac-3.4}) and (\ref{conservation-law-1}) above.

\begin{lemma}[Spinorial div-curl estimate]\label{Spin-div-curl}
Let $\Phi$ and $\Psi$ be free Dirac solutions with the same mass $m$ and Schwartz initial data. Then
\begin{equation}\label{Spinor-Div-Curl}
\begin{aligned}
    \int_{0}^{T}\int_{\mathbb{R}} &e(U_{\Phi})e(V_{\Psi})+e(V_{\Phi})e(U_{\Psi})dx_{1}dt\\
    & \lesssim \|\Psi(0)\|_{L_x^2}^{2}\Big(\|\Phi(0)\|_{L_x^{2}}^{2}+\int_{0}^{T}\int_{\mathbb{R}^{3}}|D_{y,m}V_{\Phi}| |U_{\Phi}|dxdt\Big)
\end{aligned}
\end{equation}
\end{lemma}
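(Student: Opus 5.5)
The plan is to apply Lemma~\ref{Div-Curl} twice, once for each of the two mixed terms on the left of \eqref{Spinor-Div-Curl}. In each application one balance law comes from the conservation law \eqref{conservation-law-1} for $\Psi$, which has no source term. The other comes from one of the balance laws \eqref{Dirac-3.3}, \eqref{Dirac-3.4} for $\Phi$, whose source is $\pm F_\Phi$. The signs have to be arranged so that $f^{11}f^{22}+f^{12}f^{21}$ is exactly a positive multiple of the desired product. The identities $E_\Psi+M_\Psi=2e(U_\Psi)$ and $E_\Psi-M_\Psi=2e(V_\Psi)$ make this work.

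\emph{First term.} Take $f^{11}=E_\Psi$, $f^{12}=M_\Psi$ and $G^1=0$, using \eqref{conservation-law-1} for $\Psi$. Take $f^{21}=f^{22}=e(V_\Phi)$ and $G^2=F_\Phi$, using \eqref{Dirac-3.4}. Then
\[
f^{11}f^{22}+f^{12}f^{21}=(E_\Psi+M_\Psi)\,e(V_\Phi)=2\,e(U_\Psi)e(V_\Phi).
\]

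\emph{Second term.} Write \eqref{Dirac-3.3} as the first equation, with $f^{11}=f^{12}=e(U_\Phi)$ and $G^1=-F_\Phi$. Write the conservation law for $\Psi$ in the form $\partial_t E_\Psi-\partial_1(-M_\Psi)=0$, so that $f^{21}=E_\Psi$, $f^{22}=-M_\Psi$ and $G^2=0$. Then
\[
f^{11}f^{22}+f^{12}f^{21}=e(U_\Phi)(E_\Psi-M_\Psi)=2\,e(U_\Phi)e(V_\Psi).
\]
The naive choice, pairing $\Psi$'s law in its first form with \eqref{Dirac-3.3}, produces the wrong sign. Choosing which equation plays the role of the ``$+$'' law is exactly what fixes this.

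\emph{Bounding the right-hand side of \eqref{div-curl}.} For the $\Psi$ factor, \eqref{conservation-law-2} gives $\|E_\Psi(t)\|_{L^1}=2\|\Psi(0)\|_{L^2_x}^2$ for all $t$, and $G=0$. For the $\Phi$ factor, $\|e(U_\Phi)(t)\|_{L^1}$ and $\|e(V_\Phi)(t)\|_{L^1}$ are both at most $\|E_\Phi(t)\|_{L^1}=2\|\Phi(0)\|_{L^2_x}^2$. By Cauchy--Schwarz in $\mathbb{C}^2$, the source satisfies
\[
|F_\Phi|\le 2\int_{\mathbb{R}^2}|D_{y,m}V_\Phi|\,|U_\Phi|\,dy,
\]
so $\int_0^T\!\int_{\mathbb{R}}|F_\Phi|\,dx_1\,dt\le 2\int_0^T\!\int_{\mathbb{R}^3}|D_{y,m}V_\Phi||U_\Phi|\,dx\,dt$. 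Adding the two applications yields \eqref{Spinor-Div-Curl}.

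\emph{Hypotheses of Lemma~\ref{Div-Curl}.} The only delicate point is checking the decay hypotheses. The densities must be real-valued, which they are, and must tend to zero as $|x_1|\to\infty$. Since the symbol of $e^{-itH_m(D)}$ is smooth with all derivatives polynomially bounded, $S_m(t)$ preserves Schwartz data. Hence $\Phi(t)$ and $\Psi(t)$ are Schwartz in $x$, locally uniformly in $t$. It follows that $e(U)$, $e(V)$, $E$ and $M$ decay in $x_1$, that $F_\Phi$ is integrable, and that the manipulations leading to \eqref{Dirac-3.3}--\eqref{Dirac-3.4} (in particular the self-adjointness of $D_y$ from the first lemma of this section) are justified. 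If the right-hand side of \eqref{Spinor-Div-Curl} is infinite there is nothing to prove.
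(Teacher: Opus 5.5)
Your proof is correct and is the paper's own argument. The paper also applies Lemma~\ref{Div-Curl} twice: once pairing \eqref{Dirac-3.3} with the $\Psi$ conservation law written as $\partial_t E_\Psi-\partial_1(-M_\Psi)=0$, and once with the $\Psi$ law as the ``$+$'' equation against \eqref{Dirac-3.4}. It then bounds the factors via \eqref{conservation-law-2} and $|F_\Phi|\le 2\int|D_{y,m}V_\Phi||U_\Phi|\,dy$, as you do; you add the decay and Schwartz-preservation check, which the paper leaves implicit.
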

\begin{proof}
For free solutions $\Phi$ and $\Psi$, the balance laws give
\begin{equation*}
    \partial_{t} e(U_{\Phi})+\partial_{1} e(U_{\Phi})=-F_{\Phi}, \quad  \partial_{t} e(V_{\Phi})-\partial_{1} e(V_{\Phi})=F_{\Phi}.
\end{equation*}
and 
\begin{equation*}
   \partial_t E_{\Psi} +\partial_{1} M_{\Psi}=0.
\end{equation*}
First apply Lemma \ref{Div-Curl} with
  \begin{equation}\label{UV-1}
       \begin{cases}
         \partial_{t} e(U_{\Phi})+\partial_{1} e(U_{\Phi})=-F_{\Phi},\\
           \partial_{t} E_{\Psi}-\partial_{1}(- M_{\Psi})=0
       \end{cases} 
   \end{equation}
The corresponding source terms are $G^1=-F_{\Phi}$ and $G^2=0$, while
\begin{equation*}
     f^{11}f^{22}+f^{12}f^{21}=e(U_{\Phi})(E_\Psi-M_{\Psi})=2e(U_{\Phi})e(V_{\Psi})
\end{equation*}
Hence
\begin{equation}\label{div-curl-estimate-1}
   2\int_{0}^{T}\int_{\mathbb{R}} e(U_{\Phi})e(V_{\Psi}) dx_{1}dt
   \lesssim\sup_{0\leq t\leq T}\|E_{\Psi}(t)\|_{L_{x}^{1}}
   \Big(\sup_{0\leq t\leq T} \|e(V_{\Phi})\|_{L_{x}^1}+\|F_{\Phi}\|_{L_{t,x_1}^1}\Big)
\end{equation}
Next apply Lemma \ref{Div-Curl} with
  \begin{equation}\label{UV-2}
       \begin{cases}
          \partial_{t} e(V_{\Phi})-\partial_{1} e(V_{\Phi})=F_{\Phi},\\
           \partial_{t} E_{\Psi}+\partial_{1} M_{\Psi}=0.
       \end{cases}
   \end{equation}
 Now $G^1=0$, $G^2=F_{\Phi}$, and $f^{11}f^{22}+f^{12}f^{21}=e(V_{\Phi})(E_{\Psi}+M_{\Psi})=2e(V_{\Phi})e(U_{\Psi})$. Therefore, 
 \begin{equation}\label{div-curl-estimate-2}
   2\int_{0}^{T}\int_{\mathbb{R}} e(V_{\Phi})e(U_{\Psi}) dx_{1}dt
     \lesssim\sup_{0\leq t\leq T}\|E_{\Psi}(t)\|_{L_{x}^{1}}
   \Big(\sup_{0\leq t\leq T} \|e(U_{\Phi})\|_{L_{x}^1}+\|F_{\Phi}\|_{L_{t,x_1}^1}\Big)
\end{equation}
By (\ref{conservation-law-2}), we have
\begin{equation*}
     \sup_{0\leq t\leq T}\|E_{\Psi}(t)\|_{L_{x_1}^1}=2\|\Psi(0)\|_{L_{x}^2}^2,
\end{equation*}
 and
\begin{equation*}
    \sup_{0\leq t\leq T} \Big(\|e(U_{\Phi})(t)\|_{L_{x_1}^1}+e(V_{\Phi})(t)\|_{L_{x_1}^1}\Big)\lesssim \|\Phi(0)\|_{L_x^2}^2
\end{equation*}
 Moreover, 
 \begin{equation*}
      \|F_{\Phi}\|_{L_{t,x}^1}\leq 2\int_{0}^{T}\int_{\mathbb{R}^3} |D_{y,m}V_{\Phi}||U_{\Phi}| dxdt.
 \end{equation*}
 Combining (\ref{div-curl-estimate-1}) and (\ref{div-curl-estimate-2}) proves (\ref{Spinor-Div-Curl}).
  \end{proof}

\section{Spinor null structure and spinor null forms}
For $\omega\in\mathbb{S}^2$ and $\alpha^{j}=\gamma^{0}\gamma^{j}$,  set
\begin{equation}
  P_{\omega}^{\pm}=\frac{1}{2}(I\pm\alpha\cdot\omega), \quad \alpha\cdot\omega=\sum_{j=1}^{3}\omega_{j}\alpha^{j}
  \end{equation}
\begin{lemma}\label{self-adjoint}
For every $\omega\in\mathbb{S}^2$, the operators $P^{\pm}_{\omega}$ are mutually orthogonal self-adjoint projections. More precisely, 
\begin{equation*}
    \big(P_{\omega}^{\pm}\big)^{*}=P_{\omega}^{\pm},\quad  \big(P_{\omega}^{\pm}\big)^{2}=P_{\omega}^{\pm}, \quad
     P_{\omega}^{+}P_{\omega}^{-}=0,\quad P_{\omega}^{+}+P_{\omega}^{-}=I
\end{equation*}
\end{lemma}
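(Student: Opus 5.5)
The whole statement follows from two facts about the matrix $A:=\alpha\cdot\omega$: it is Hermitian, and it squares to the identity. Once these are in hand, the four projection identities are one-line algebra.

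\emph{Hermitian.} In the standard representation, $\alpha^{j}=\gamma^0\gamma^j=\begin{pmatrix}0&\sigma^j\\ \sigma^j&0\end{pmatrix}$. Since each Pauli matrix $\sigma^j$ is Hermitian, so is each $\alpha^j$. As the coefficients $\omega_j$ are real, $A^*=A$.

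\emph{Squares to the identity.} Expanding,
\begin{equation*}
A^2=\sum_{j,k}\omega_j\omega_k\alpha^j\alpha^k=\tfrac12\sum_{j,k}\omega_j\omega_k\{\alpha^j,\alpha^k\}.
\end{equation*}
From the block form, $\alpha^j\alpha^k=\begin{pmatrix}\sigma^j\sigma^k&0\\0&\sigma^j\sigma^k\end{pmatrix}$. The Pauli relation $\{\sigma^j,\sigma^k\}=2\delta^{jk}I_2$ then gives $\{\alpha^j,\alpha^k\}=2\delta^{jk}I$. Alternatively, this follows from the Clifford relation $\{\gamma^\mu,\gamma^\nu\}=-2\eta^{\mu\nu}I$, using $\gamma^0\gamma^j=-\gamma^j\gamma^0$ and $(\gamma^0)^2=I$. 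Either way,
\begin{equation*}
A^2=|\omega|^2I=I,
\end{equation*}
since $\omega\in\mathbb{S}^2$.

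\emph{The four identities.}
\begin{itemize}
\item Self-adjointness of $P^\pm_\omega=\frac12(I\pm A)$ is immediate from $A^*=A$.
\item Idempotence: $(P^\pm_\omega)^2=\frac14(I\pm 2A+A^2)=\frac14(2I\pm2A)=P^\pm_\omega$.
\item Orthogonality: $P^+_\omega P^-_\omega=\frac14(I-A^2)=0$, and likewise $P^-_\omega P^+_\omega=0$.
\item Completeness: $P^+_\omega+P^-_\omega=I$ by definition.
\end{itemize}
Since both projections are self-adjoint and $P^+_\omega P^-_\omega=0$, their ranges are mutually orthogonal.

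There is no real obstacle in this argument. The only point needing care is the anticommutation $\{\alpha^j,\alpha^k\}=2\delta^{jk}I$. The paper's Clifford convention carries a minus sign, so the cleanest route is to verify this directly from the block form of $\alpha^j$ rather than from the Clifford relation.
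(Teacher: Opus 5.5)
Your proof is correct and follows the paper's argument. Both reduce everything to $A^*=A$ and $A^2=I$ for $A=\alpha\cdot\omega$, followed by the same one-line projection algebra. The only difference is cosmetic: you verify Hermiticity and $\{\alpha^j,\alpha^k\}=2\delta^{jk}I$ from the block form $\alpha^j=\begin{pmatrix}0&\sigma^j\\ \sigma^j&0\end{pmatrix}$, whereas the paper uses $(\gamma^0)^*=\gamma^0$, $(\gamma^j)^*=-\gamma^j$ and the Clifford relation. That relation gives $\{\alpha^j,\alpha^k\}=-\{\gamma^j,\gamma^k\}=2\delta^{jk}I$ directly, so the sign convention you were wary of causes no trouble.
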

\begin{proof}
Using $(\gamma^{0})^{*}=\gamma^{0}$, $(\gamma^{j})^{*}=-\gamma^{j}$ and $\gamma^{0}\gamma^{j}=-\gamma^{j}\gamma^{0}$, we obtain 
\begin{equation*}
   (\alpha^{j})^{*}=(\gamma^{0}\gamma^{j})^{*}=(\gamma^{j})^{*}(\gamma^{0})^{*}=-\gamma^{j}\gamma^{0}=\gamma^{0}\gamma^{j}=\alpha^{j}.
\end{equation*}
Moreover, the Clifford relations give 
\begin{equation*}
   \alpha^{j}\alpha^{k}+\alpha^{k}\alpha^{j}=-(\gamma^{j}\gamma^{k}+\gamma^{k}\gamma^{j})=2g_{jk}I.
\end{equation*}
Consequently, 
\begin{equation*}
   (\alpha\cdot\omega)^{*}=\alpha\cdot\omega,\quad 
   (\alpha\cdot\omega)^{2}=\frac{1}{2}\sum_{j,k=1}^{3}\omega_{j}\omega_{k}(\alpha^{j}\alpha^{k}+\alpha^{k}\alpha^{j})=|\omega|^{2}I=I.
\end{equation*}
Then the stated identities follow directly from $P_{\omega}^{\pm}=(I\pm\alpha\cdot\omega)/2$.
\end{proof}

If $\xi=|\xi|\omega$, then $\alpha\cdot\xi=|\xi|\alpha\cdot\omega$. Hence the projection operator $P_{\omega}^{\pm}=(I\pm\alpha\cdot\omega)/2$ is also the spectral projection of the massless Dirac symbol $\alpha\cdot\xi$ associated with the eigenvalue $\pm|\xi|$.

\begin{definition}[Dirac null structure]\label{null-structure}
Let $B:\mathbb{C}^{4}\times\mathbb{C}^{4}$ be a continuous sesquilinear form. We say that $B$ has a Dirac null structure if 
\begin{equation*}
  B(P_{\omega}^{\pm}z, P_{\omega}^{\pm}w)=0
\end{equation*}
for all $\omega\in\mathbb{S}^2$ and $z,w\in\mathbb{C}^{4}$. Equivalently,
\begin{equation*}
  B(z,w)=B(P_{\omega}^{+}z, P_{\omega}^{-}w)+B(P_{\omega}^{-}z,P_{\omega}^{+}w)
\end{equation*}
Thus only mixed-polarization interactions remain.
\end{definition}

\begin{proposition}\label{prop-3.3}
  Let $\Gamma\in\mathbb{C}^{4\times4}$ satisfy
 $\Gamma(\alpha\cdot\omega)=-(\alpha\cdot\omega)\Gamma$
  for every $\omega\in\mathbb{S}^{2}$. Then
  \begin{equation*}
     B_{\Gamma}(z,w):=\langle \Gamma z,w\rangle_{C^4}
  \end{equation*}
  possesses a Dirac null structure. 
\end{proposition}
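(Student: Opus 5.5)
The plan is to reduce everything to the algebra of the projections in Lemma \ref{self-adjoint}, together with the fact that anticommutation with $\alpha\cdot\omega$ swaps the two eigenspaces. Fix $\omega\in\mathbb{S}^2$ and write $A=\alpha\cdot\omega$. First, from $\Gamma A=-A\Gamma$ I obtain the intertwining relations
\begin{equation*}
\Gamma P_{\omega}^{\pm}=\tfrac12(\Gamma\pm\Gamma A)=\tfrac12(I\mp A)\Gamma=P_{\omega}^{\mp}\Gamma .
\end{equation*}
Thus $\Gamma$ maps $\operatorname{Ran}P_{\omega}^{\pm}$ into $\operatorname{Ran}P_{\omega}^{\mp}$.

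Next, I compute the equal-polarization terms. Using the intertwining relation, then the self-adjointness of $P_\omega^{\pm}$, and finally $P_\omega^{\mp}P_\omega^{\pm}=0$ (all from Lemma \ref{self-adjoint}), I get
\begin{equation*}
B_{\Gamma}(P_\omega^{\pm}z,P_\omega^{\pm}w)=\langle \Gamma P_\omega^{\pm}z,P_\omega^{\pm}w\rangle=\langle P_\omega^{\mp}\Gamma z,P_\omega^{\pm}w\rangle=\langle \Gamma z,P_\omega^{\mp}P_\omega^{\pm}w\rangle=0 .
\end{equation*}
This holds for all $z,w\in\mathbb{C}^4$ and every $\omega$, which is exactly the condition in Definition \ref{null-structure}.

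For the equivalent mixed-polarization form, I insert $z=P_\omega^+z+P_\omega^-z$ and $w=P_\omega^+w+P_\omega^-w$, which is allowed since $P_\omega^++P_\omega^-=I$. Expanding by sesquilinearity gives four terms, and the two diagonal ones vanish by the previous step. Continuity of $B_\Gamma$ is immediate because it is a finite-dimensional sesquilinear form.

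I do not expect a genuine obstacle. The only point needing care is the convention for the inner product: whether it is linear in the first or the second slot. The identity $\langle P z,w\rangle=\langle z,Pw\rangle$ for self-adjoint $P$ holds in either convention, so the argument is unaffected.
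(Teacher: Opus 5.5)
Your proposal is correct and follows the paper's own argument. You derive $\Gamma P_\omega^{\pm}=P_\omega^{\mp}\Gamma$, use self-adjointness and $P_\omega^{\mp}P_\omega^{\pm}=0$ to make the equal-polarization terms vanish, and then expand using $P_\omega^{+}+P_\omega^{-}=I$; unlike the paper, you also write out the one-line derivation of the intertwining relation.
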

\begin{proof}
    The condition $\Gamma(\alpha\cdot\omega)=-(\alpha\cdot\omega)\Gamma$ implies 
    \begin{equation*}
       \Gamma P_{\omega}^{\pm}=P_{\omega}^{\mp}\Gamma
    \end{equation*}
    Therefore, by Lemma \ref{self-adjoint}, 
    \begin{equation*}
      B_{\Gamma}(P_{\omega}^{\pm}z, P_{\omega}^{\pm}w)=\langle\Gamma P_{\omega}^{\pm}z, P_{\omega}^{\pm}w\rangle=\langle P_{\omega}^{\mp}\Gamma z, P_{\omega}^{\pm}w\rangle = \langle \Gamma z,  P_{\omega}^{\mp} P_{\omega}^{\pm}w\rangle=0
    \end{equation*}
    Expanding $z=P_{\omega}^{+}z+P_{\omega}^{-}z$ and $w=P_{\omega}^{+}w+P_{\omega}^{-}w$ gives
    \begin{equation*}
       B_{\Gamma}(z,w)=B_{\Gamma}(P_{\omega}^{+}z, P_{\omega}^{-}w)+B_{\Gamma}(P_{\omega}^{-}z, P_{\omega}^{+}w)
    \end{equation*}
    By Definition \ref{null-structure}, $ B_{\Gamma}(z,w)$ is a null form.
\end{proof}

\begin{corollary}[Quadractic null form]
    Assume $\beta=\gamma^{0}$ and $\gamma^{5}=i\gamma^{0}\gamma^{1}\gamma^{2}\gamma^{3}$. Then 
    \begin{equation*}
       B_{\beta}(z,w)=\langle \beta z, w\rangle_{\mathbb{C}^{4}}, \quad 
       B_{\beta\gamma^{5}}(z,w)=\langle \beta\gamma^{5}z,w\rangle_{\mathbb{C}^{4}}
    \end{equation*}
    are quadratic spinor null forms.
\end{corollary}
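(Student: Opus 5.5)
By Proposition \ref{prop-3.3}, it suffices to show that $\beta$ and $\beta\gamma^{5}$ each anticommute with $\alpha\cdot\omega$ for every $\omega\in\mathbb{S}^2$. Since $\alpha\cdot\omega=\sum_j\omega_j\alpha^j$ is linear in $\omega$, I reduce this to checking $\{\Gamma,\alpha^j\}=0$ for $j=1,2,3$ and for both choices of $\Gamma$. Everything follows from the Clifford relations $\gamma^\mu\gamma^\nu+\gamma^\nu\gamma^\mu=-2\eta^{\mu\nu}I$ already recorded in the paper. In particular, distinct gamma matrices anticommute, and $(\gamma^0)^2=I$.

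\emph{Scalar channel.} For $\Gamma=\beta=\gamma^0$ I compute
\[
\beta\alpha^j=\gamma^0\gamma^0\gamma^j=\gamma^j,\qquad
\alpha^j\beta=\gamma^0\gamma^j\gamma^0=-\gamma^0\gamma^0\gamma^j=-\gamma^j.
\]
The second identity uses $\gamma^j\gamma^0=-\gamma^0\gamma^j$ once. Hence $\{\beta,\alpha^j\}=0$.

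\emph{Pseudoscalar channel.} For $\Gamma=\beta\gamma^5$ I first show that $\gamma^5=i\gamma^0\gamma^1\gamma^2\gamma^3$ anticommutes with every $\gamma^\mu$. Moving $\gamma^\mu$ through the product of four gamma matrices passes three factors distinct from $\gamma^\mu$ and one equal to it. The three distinct factors contribute $(-1)^3$, and the equal factor contributes no sign, so $\gamma^\mu\gamma^5=-\gamma^5\gamma^\mu$. It follows that $\gamma^5$ commutes with $\alpha^j=\gamma^0\gamma^j$, because it anticommutes with each of the two factors. Then, using $\{\beta,\alpha^j\}=0$ from the scalar case,
\[
\beta\gamma^5\alpha^j=\beta\alpha^j\gamma^5=-\alpha^j\beta\gamma^5 .
\]
This gives $\{\beta\gamma^5,\alpha^j\}=0$. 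Summing over $j$ with weights $\omega_j$ yields the anticommutation with $\alpha\cdot\omega$, and Proposition \ref{prop-3.3} then shows that both forms have the Dirac null structure.

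There is no real obstacle here; the only care needed is sign bookkeeping in the $\gamma^5$ commutation. As a cross-check, one can compute in the Dirac representation that $\gamma^5=\begin{pmatrix}0&I_2\\ I_2&0\end{pmatrix}$ and $\alpha^j=\begin{pmatrix}0&\sigma^j\\ \sigma^j&0\end{pmatrix}$. With these, $\beta\gamma^5=\begin{pmatrix}0&I_2\\-I_2&0\end{pmatrix}$, and the anticommutation with $\alpha^j$ can be verified directly by $2\times2$ block multiplication.
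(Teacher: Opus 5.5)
Your proposal is correct and follows the paper's proof essentially line by line. You verify $\beta\alpha^j=-\alpha^j\beta$ from the Clifford relations, then use that $\gamma^5$ anticommutes with every $\gamma^\mu$ and hence commutes with $\alpha^j$. The paper only asserts that anticommutation, while your sign count justifies it. You then deduce $(\beta\gamma^5)\alpha^j=-\alpha^j(\beta\gamma^5)$ and conclude by linearity in $\omega$ and Proposition~\ref{prop-3.3}. Your block-matrix cross-check is also accurate: $\gamma^5=\begin{pmatrix}0&I_2\\ I_2&0\end{pmatrix}$ and $\beta\gamma^5=\begin{pmatrix}0&I_2\\ -I_2&0\end{pmatrix}$.
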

\begin{proof}
   For $j=1,2,3$, 
   \begin{equation*}
      \beta\alpha^{j}=\gamma^{0}\gamma^{0}\gamma^{j}=-\gamma^{0}\gamma^{j}\gamma^{0}=-\alpha^{j}\beta
   \end{equation*}
   Since $\gamma^{5}$ anticommutes with each $\gamma^{\mu}$, it commutes with $\alpha^{j}=\gamma^{0}\gamma^{j}$. Hence 
   \begin{equation*}
      (\beta\gamma^{5})\alpha^{j}=\beta\alpha^{j}\gamma^{5}=-\alpha^{j}\beta\gamma^{5}.
   \end{equation*}
   Thus the conclusion follows from Proposition \ref{prop-3.3}.
\end{proof}

\begin{definition}[Cubic spinor null form]\label{Cubic spinor null form}
   Let $\Gamma_{1}$ and $\Gamma_{2}$ satisfy $\Gamma_{i}(\alpha\cdot\omega)=-(\alpha\cdot\omega)\Gamma_{i}$. Define
   \begin{equation*}
      N_{\Gamma_{1}, \Gamma_{2}}(z,w,u):=B_{\Gamma_{1}}(z,w)\Gamma_{2}u.
   \end{equation*}
 When paired against a fourth spinor v, this expression factorizes as 
 \begin{equation*}
    \big\langle N_{\Gamma_{1}, \Gamma_{2}}(z,w,u), v\big\rangle_{\mathbb{C}^4}=\overline{B_{\Gamma_{1}}(z,w)}B_{\Gamma_{2}}(u,v).
 \end{equation*}
\end{definition}

\bigskip

We finally record the form of projectors in the direction used in the spinorial div-curl argument. Let 
$e_{1}=(1,0,0)$ and $Z=(z_{1}, z_{2}, z_{3}, z_{4})^{T}$ be any given spinor fields in $\mathbb{R}^{1+3}$. Define
\begin{equation*}
   U_{Z}:=\begin{pmatrix}
                   z_{1}+z_{4}\\
                   z_{2}+z_{3}
                \end{pmatrix},\quad   V_{Z}:=\begin{pmatrix}
                                                                   z_{1}-z_{4}\\
                                                                   z_{2}-z_{3}
                                                                 \end{pmatrix}, \quad J:=\begin{pmatrix}
                                                                                                             0 & 1\\
                                                                                                             1 & 0
                                                                                                        \end{pmatrix}
\end{equation*}
When $\omega=e_{1}=(1,0,0)$, we have
  \begin{equation*}
      \alpha\cdot e_{1}=\alpha^{1}=\gamma^{0}\gamma^{1}=
      \begin{pmatrix}
         0 & 0 & 0 & 1\\
         0 & 0 & 1 & 0\\
         0 & 1 & 0 & 0\\
         1 & 0 & 0 & 0
      \end{pmatrix}.
 \end{equation*}
Then for the spinor field $Z=(z_{1}, z_{2}, z_{3}, z_{4})$ and $W=(w_{1}, w_{2}, w_{3}, w_{4})$ with $z_{i}, w_{i}\in\mathbb{C}$, a direct calculation gives
 \begin{equation*}
    P_{e_{1}}^{+}Z=\frac{1}{2}(I+\alpha^{1})=\frac{1}{2}\big(z_{1}+z_{4}, z_{2}+z_{3}, z_{2}+z_{3}, z_{1}+z_{4}\big)^{T}=\frac{1}{2}(U_{Z}, JU_{Z})^{T}
 \end{equation*}
  \begin{equation*}
    P_{e_{1}}^{-}W=\frac{1}{2}(I-\alpha^{1})=\frac{1}{2}\big(w_{1}-w_{4}, w_{2}-w_{3}, -w_{2}+w_{3}, -w_{1}+w_{4}\big)^{T}=\frac{1}{2}(V_{W}, -JV_{W})^{T}
 \end{equation*}
Thus
\begin{equation}
   B(P_{e_{1}}^{+}Z, P_{e_{1}}^{-}W)+B(P_{e_{1}}^{-}Z, P_{e_{1}}^{+}W)\lesssim|U_{Z}||V_{W}|+|U_{W}||V_{Z}|
\end{equation}  
Consequently, every null form from Proposition \ref{prop-3.3} satisfies
\begin{equation}
    \big|B_{\Gamma}(Z, W)\big|\lesssim \frac{1}{2}\ \|\Gamma\|\ \Big(|U_{Z}| |V_{W}|+|V_{Z}| |U_{W}|\Big)
\end{equation}
This is the precise link between the spinor null structure and the spinorial div-curl estimate of the preceding section.

 \section{Proof of the Main Theorem 1.1}
 
 We first prove the bilinear estimate. Set 
 \begin{equation*}
   N_{*}:=\min\{N_1, N_2\}.
 \end{equation*}
 By symmetry, it suffices to consider the case $N_1\leq N_2$.
 Let 
 \begin{equation*}
     \rho:=\min\{N_1, T^{-1}\}.
 \end{equation*}
 Divide the annulus $\{|\xi|\sim N_1\}$ into smooth angular sectors of width about $\rho/N_{1}$.
 We denote the central direction of each sector $\kappa\in\Omega_{N_1,\rho}$ by $\omega_{\kappa}$.
 Thus, if $\omega_{\kappa}$ denotes the central direction of $\kappa$, then 
 \begin{equation*}
       \operatorname{supp}\chi_{\kappa}\subset \left\{ \xi: |\xi|\sim N_1, \angle(\xi, \omega_{\kappa})\lesssim \frac{\rho}{N_1}\right\}
       \quad\text{and}\quad  \#\Omega_{N_1,\rho}\lesssim \left(\frac{N_1}{\rho}\right)^{2}.
 \end{equation*}
 Define $\Phi_{N_1,\kappa}:=S_{m}(t)P_{N_1}\chi_{\kappa}(D)\Phi_{0}$. Then 
 \begin{equation*}
     \sum_{\kappa\in\Omega_{N_1,\rho}}\|\Phi_{N_1, \kappa}(0)\|_{L_x^2}^2\lesssim \|\Phi_{N_1}(0)\|_{L_x^2}^2,
 \end{equation*}
 where $\Phi_{N_1}=\sum_{\kappa\in\Omega_{N_1, \rho}}\Phi_{N_1, \kappa}$. Consequently, 
 \begin{equation}\label{decomposition}
    B_{\Gamma}(\Phi_{N_1}, \Psi_{N_2})=\sum_{\kappa\in\Omega_{N_1, \rho}}B_{\Gamma}(\Phi_{N_1, \kappa}, \Psi_{N_2})
 \end{equation}
 
 For $\kappa\in\Omega_{N_1, \rho}$. By rotational invariance, we may choose coordinates so that $\omega_{k}=e_1$. Writing 
 $x=(x_1, y)$, the Fourier support of $\Phi_{N_1, \kappa}$ then satisfies 
  \[
        |\xi_{y}| \lesssim \rho=\min\{N_{1}, T^{-1}\}.
 \]
The pointwise null-form estimate from section 3 gives 
\begin{equation*}
 \left|B_{\Gamma}(\Phi_{N_1,\kappa},\Psi_{N_2})\right|^{2}\lesssim_{\Gamma} |U_{\Phi_{N_1,\kappa}}|^2|V_{\Psi_{N_2}}|^2+|V_{\Phi_{N_1,\kappa}}|^2|U_{\Psi_{N_2}}|^2
 \end{equation*}
Applying the two-dimensional Bernstein inequality in the transverse variables, and using the spinorial div-curl estimate in Lemma \ref{Spin-div-curl}, we obtain 
\begin{equation}\label{bilinear-1}
\begin{aligned}
   &\|B_{\Gamma}(\Phi_{N_1,\kappa},\Psi_{N_2})\|_{L_{t,x}^2}^{2}\\
  &\lesssim_{\Gamma} \int_{0}^{T}\int_{\mathbb{R}}\int_{\mathbb{R}^2}  |U_{\Phi_{N_1,\kappa}}|^2\cdot|V_{\Psi_{N_2}}|^2+|V_{\Phi_{N_1,\kappa}}|^2\cdot|U_{\Psi_{N_2}}|^2 dydx_{1}dt\\
  &\lesssim_{\Gamma} \int_{0}^{T}\int_{\mathbb{R}} \|U_{\Phi_{N_1,\kappa}}\|_{L_y^{\infty}}^2\cdot\|V_{\Psi_{N_2}}\|_{L_y^2}^2+\|V_{\Phi_{N_1,\kappa}}\|_{L_y^{\infty}}^2\cdot\|U_{\Psi_{N_2}}\|_{L_y^2}^2 dx_{1}dt\\
  &\lesssim_{\Gamma} \rho^2 \int_{0}^{T}\int_{\mathbb{R}} \underbrace{\|U_{\Phi_{N_1,\kappa}}\|_{L_y^{2}}^2\cdot\|V_{\Psi_{N_2}}\|_{L_y^2}^2+\|V_{\Phi_{N_1,\kappa}}\|_{L_y^{2}}^2\cdot\|U_{\Psi_{N_2}}\|_{L_y^2}^2}_{e(U_{\Phi_{N_1,\kappa}})\cdot e(V_{\Psi_{N_2}})+e(V_{\Phi_{N_1,\kappa}})\cdot e(U_{\Psi_{N_2}})}
   dx_{1}dt\\
    &\lesssim_{\Gamma} \rho^2 \|\Psi_{N_2}(0)\|_{L_x^2}^{2}\Big(\|\Phi_{N_1,\kappa}(0)\|_{L_x^2}^{2}+\int_{0}^{T}\int_{\mathbb{R}^{3}}|D_{y,m}V_{\Phi_{N_1,\kappa}}|\cdot |U_{\Phi_{N_1,\kappa}}| dxdt\Big)
\end{aligned}
\end{equation}
Since the transverse Fourier support has size $O(\rho)$, then
\[
\|D_{y,m}V_{\Phi_{N_1,\kappa}}\|_{L_{y}^2}\leq (\rho+m)\|V_{\Phi_{N_1,\kappa}}\|_{L_{y}^2}.
\]
Using Cauchy-Schwarz and conservation of the Dirac charge, we obtain
\begin{equation}\label{bilinear-2}
\begin{aligned}
    &\int_{0}^{T}\int_{\mathbb{R}^{3}}|D_{y,m}V_{\Phi_{N_1,\kappa}}| |U_{\Phi_{N_1,\kappa}}| dxdt\\
    &\lesssim \int_{0}^{T}\int_{\mathbb{R}} (\rho+m)\|V_{\Phi_{N_1,\kappa}}\|_{L_{y}^2}
    \|U_{\Phi_{N_1,\kappa}}\|_{L_{y}^2} dx_{1}dt\\
    &\lesssim (\rho+m)\int_{0}^{T} \Big(\int_{\mathbb{R}}\|V_{\Phi_{N_1,\kappa}}\|_{L_{y}^2}^2dx_{1}\Big)^{1/2}
     \Big(\int_{\mathbb{R}}\|U_{\Phi_{N_1,\kappa}}\|_{L_{y}^2}^2dx_{1}\Big)^{1/2} dt\\
    &\lesssim (\rho+m)T \|\Phi_{N_1,\kappa}(0)\|_{L_x^2}^{2}
    % \lesssim (1+mT)\|\Phi_{N_1,\kappa}(0)\|_{L_x^2}^{2}
\end{aligned}
 \end{equation}
 Because $\rho T\leq 1$, it follows that 
 \[
     1+(\rho+m)T\leq 2+mT
 \]
 Combing inequalities (\ref{bilinear-1}) and (\ref{bilinear-2}), we have
 \begin{equation*}
    \|B_{\Gamma}(\Phi_{N_1,\kappa},\Psi_{N_2})\|_{L_{t,x}^2}\lesssim_{\Gamma} \rho\sqrt{2+mT}\|\Phi_{N_1,\kappa}(0)\|_{L^2}\cdot \|\Psi_{N_2}(0)\|_{L^2}
 \end{equation*}
 Summing over the sectors in (\ref{decomposition}) and using Cauchy-Schwarz, we obtain
 \begin{align*}
    &\|B_{\Gamma}(\Phi_{N_1},\Psi_{N_2})\|_{L_{t,x}^2}\\
    &\leq \sum_{\kappa\in\Omega_{N_1,\rho}} \|B_{\Gamma}(\Phi_{N_1,\kappa},\Psi_{N_2})\|_{L_{t,x}^2}\\
    &\lesssim_{\Gamma} \rho\sqrt{2+mT}\|\Psi_{N_2}(0)\|_{L_x^2}\sum_{\kappa\in\Omega_{N_1,\rho}} \|\Phi_{N_1,\kappa}(0)\|_{L_x^2}\\
    &\lesssim_{\Gamma} \rho\sqrt{2+mT}\|\Psi_{N_2}(0)\|_{L_x^2} \Big(\sum_{\kappa\in\Omega_{N_1,\rho}} 1^2\Big)^{1/2} \Big(\sum_{\kappa\in\Omega_{N_1,\rho}} \|\Phi_{N_1,\kappa}(0)\|_{L_x^2}^2\Big)^{1/2}\\
    &\lesssim_{\Gamma} N_{1}\sqrt{2+mT} \|\Phi_{N_1}(0)\|_{L_x^2} \|\Psi_{N_2}(0)\|_{L_x^2}
  \end{align*}
  Interchanging $\Phi$ and $\Psi$ gives the corresponding estimate when $N_2\leq N_2$. Hence
  \begin{equation}\label{null-estimate}
     \|B_{\Gamma}(\Phi_{N_1},\Psi_{N_2})\|_{L_{t,x}^2}\lesssim_{\Gamma} N_{*}\sqrt{2+mT} \|\Phi_{N_1}(0)\|_{L_x^2} \|\Psi_{N_2}(0)\|_{L_x^2}
  \end{equation}
 
 \medskip
 
 We next record the direct product estimate. Assuming again that $N_1\leq N_2$, the pointwise bound 
\[
    \big|B_{\Gamma}(\Phi_{N_1},\Psi_{N_2})\big| \lesssim_{\Gamma} |\Phi_{N_1}| |\Psi_{N_2}|
\]
and the three-dimensional Bernstein inequality  imply
\begin{equation}\label{direct-estimate}
\begin{aligned}
    \|B_{\Gamma}(\Phi_{N_1},\Psi_{N_2})\|_{L_{t,x}^2}^2 
    &\lesssim_{\Gamma} \int_{0}^{T}\int_{\mathbb{R}^3} |\Phi_{N_1}|^2|\Psi_{N_2}|^2 dxdt
    \lesssim_{\Gamma} \|\Phi_{N_1}(t,x)\|^2_{L_t^2 L_x^{\infty}}\|\Psi_{N_2}(t,x)\|^2_{L_t^{\infty} L_x^2}\\
    &\lesssim_{\Gamma} N_1^{3}T\|\Phi_{N_1}(0)\|^2_{L_x^2}\|\Psi_{N_2}(0)\|^2_{L_x^2}
%    &\lesssim \int_{0}^{T} \|\Phi_{N_1}(t,x)\|_{L_x^{\infty}}^{2} \|\Psi_{N_2}(t,x)\|_{L_x^2}^2 dt\\
   % &\lesssim N_{*}^{3} \int_{0}^{T} \|\Phi_{N_1}(t,x)\|_{L_x^2}^{2} \|\Psi_{N_2}(t,x)\|_{L_x^2}^2 dt\\
    %&\lesssim N_{*}^3 T\|\Phi_{N_1}(0)\|_{L_x^2}^2\|\Psi_{N_2}(0)\|_{L_x^2}^2
\end{aligned}
\end{equation}
By symmetry, we obtain
\begin{equation}\label{bilinear-estimate}
  \|B_{\Gamma}(\Phi_{N_1},\Psi_{N_2})\|_{L_{t,x}^2} \lesssim_{\Gamma} N_{*}^{3/2}T^{1/2}\|\Phi_{N_1}(0)\|_{L^2_{x}}\|\Psi_{N_2}(0)\|_{L_x^2}.
\end{equation}
Taking the minimum of  (\ref{null-estimate}) and (\ref{bilinear-estimate}) proves
\[
   \|B_{\Gamma}(\Phi_{N_1},\Psi_{N_2})\|_{L_{t,x}^2} \lesssim \min\big\{N_{*}\sqrt{2+mT}, \ N_{*}^{3/2} T^{1/2}\big\} \|\Phi_{N_1}(0)\|_{L_x^2} \|\Psi_{N_2}(0)\|_{L_x^2}.
\]

\bigskip

It remains to prove the estimate for the cubic null form. Let $\Phi_{N_j}^{j}, j=0,1,2,3$, be dyadic components of the free Dirac solutions. By the factorization in Definition \ref{Cubic spinor null form},
 \begin{align*}
    \left|\Big\langle B_{\Gamma_{1}}\big(\Phi^1_{N_1}, \Phi^2_{N_2}\big)\Gamma_{2}\Phi^3_{N_3}, \Phi^0_{N_0}\Big\rangle\right| = \left|B_{\Gamma_{1}}\big(\Phi^1_{N_1}, \Phi^2_{N_2}\big)\right|  \left| B_{\Gamma_{2}}\big(\Phi^3_{N_3},  \Phi^0_{N_0}\big)\right|
\end{align*}
Cauchy-Schwarz in spacetime and the bilinear estimate give
 \begin{align*}
 &  \left| \int_{0}^{T}\int_{\mathbb{R}} \Big\langle B_{\Gamma_{1}}\big(\Phi^1_{N_1}, \Phi^2_{N_2}\big)\Gamma_{2}\Phi^3_{N_3},  \Phi^0_{N_0} \Big\rangle dxdt \right| \\
   & \lesssim_{\Gamma}  \|B_{\Gamma_{1}}(\Phi_{N_{1}}^1, \Phi_{N_2}^2)\|_{L_{t,x}^2}\  \|B_{\Gamma_{2}}(\Phi_{N_{3}}^3,  \Phi^0_{N_0})\|_{L_{t,x}^2}\\
   & \lesssim_{\Gamma} C_{m,T}(N_1, N_2) C_{m,T}(N_3, N_0)\prod_{j=0}^{j=3} \|\Phi^j_{N_j}(0)\|_{L_x^2}
    \end{align*}
This completes the proof of Theorem~1.1.

\bigskip

\section{Proof of the Main Theorem 1.2}
\subsection{Massive spectral projections}
For $\xi\in\mathbb{R}^3$, set
\begin{equation*}
    H_m(\xi):=\alpha\cdot\xi+m\beta, \quad \lambda_{m}(\xi):=\sqrt{|\xi|^2+m^2}
\end{equation*}
The Clifford relations imply
\begin{equation*}
 \alpha^{i}\alpha^{j}+\alpha^{j}\alpha^{i}=0,\quad \beta\alpha^{j}+\alpha^{j}\beta=0, \quad \beta^2=I.
\end{equation*}
Consequently, we have $ H^{2}_{m}(\xi)=(\alpha\cdot\xi+m\beta)(\alpha\cdot\xi+m\beta)=(|\xi|^2+m^2)I$.
The eigenvalues of $H_{m}(\xi)$ are therefore $\pm\lambda_{m}(\xi)$, with spectral projections
\begin{equation}\label{projection}
    \Pi_{s}^{m}(\xi):=\frac{1}{2}\left(I+s\frac{H_{m}(\xi)}{\lambda_{m}(\xi)}\right), \quad s\in\{+,-\}
\end{equation}
These projections satisfy
\begin{equation*}
     \big(\Pi_{s}^{m}(\xi)\big)^{*}=\Pi_{s}^{m}(\xi),\quad \Pi_{s}^{m}(\xi)\Pi_{-s}^{m}(\xi)=0, \quad \Pi_{+}^{m}(\xi)+\Pi_{-}^{m}(\xi)=I.
\end{equation*}
Furthermore, we have
\begin{equation*}
     H_{m}(\xi)\Pi_{s}^{m}(\xi)=s\lambda_{m}(\xi)\Pi_{s}^{m}(\xi).
\end{equation*}
For later comparison, observe that
\begin{equation}\label{projection-operator-1}
   \Pi_{s}^{m}(\xi)= \frac{1}{2} \left(I+s \alpha\cdot \frac{\xi}{|\xi|}\right)+O\left(\frac{m}{|\xi|}\right), \quad \text{when}\ |\xi|\gg m,
\end{equation}
and 
\begin{equation}\label{projection-operator-2}
   \Pi_{s}^{m}(\xi)= \frac{1}{2} (I+s\beta)+O\left(\frac{|\xi|}{m}\right), \quad \text{when}\ |\xi|\ll m,
\end{equation}
Thus the massive projections approach the massless directional projections at high frequency, while at low frequency they approach the eigenspaces of $\beta$.

If $\Phi$ is a free Dirac solution, then $\hat{\Phi}(t,\xi)=e^{-itH_{m}(\xi)}\hat{\Phi}(0,\xi)$. Moreover,
\begin{align*}
    \hat{\Phi}(t,\xi)&=e^{-it\lambda_{m}(\xi)}\Pi_{+}^{m}(\xi)\hat{\Phi}(0,\xi)+e^{it\lambda_{m}(\xi)}\Pi_{-}^{m}(\xi)\hat{\Phi}(0,\xi)\\
    &=e^{-it\lambda_{m}(\xi)}\hat{\Phi}_{+}(0,\xi)+e^{it\lambda_{m}(\xi)}\hat{\Phi}_{-}(0,\xi)\\
    &=\hat{\Phi}_{+}(t,\xi)+\hat{\Phi}_{-}(t,\xi)
\end{align*}
Therefore, in the physical space, the solution $\Phi$ can be decomposed as 
\begin{align*}
     \Phi(t,x)&=\frac{1}{(2\pi)^3}\int_{\mathbb{R}^3} e^{ix\cdot\xi}\hat{\Phi}_{+}(t,\xi)d\xi+\frac{1}{(2\pi)^3}\int_{\mathbb{R}^3} e^{ix\cdot\xi}\hat{\Phi}_{-}(t,\xi)d\xi\\
     &=\Phi_{+}(t,x)+\Phi_{-}(t,x)
\end{align*}
The bilinear form decomposes according to the energy branches:
\begin{align*}
B_{\Gamma}(\Phi_{N_{1}}, \Psi_{N_{2}})&=\underbrace{B_{\Gamma}(\Phi_{N_{1},+}, \Psi_{N_{2},+}) + B_{\Gamma}(\Phi_{N_{1}, -}, \Psi_{N_{2},-})}_{ \text{same-branch interaction}}\\
&+ \underbrace{B_{\Gamma}(\Phi_{N_{1},+}, \Psi_{N_{2},-}) +B_{\Gamma}(\Phi_{N_{1},-}, \Psi_{N_{2},+})}_{ \text{oppesite-branch interaction}}
\end{align*}
where $\Phi_{N_{1},s}$ and $\Psi_{N_{2},s}$ have the explicit expressions 
\begin{equation*}
   \Phi_{N_1, s}(t,x)=\frac{1}{(2\pi)^3}\int_{\mathbb{R}^3} e^{ix\cdot\xi}e^{-its\lambda_{m}(\xi)}\chi_{N_1}(\xi)\Pi_{s}^{m}(\xi)\hat{\Phi}(0,\xi)d\xi,
\end{equation*}
\begin{equation*}
   \Psi_{N_2, s}(t,x)=\frac{1}{(2\pi)^3}\int_{\mathbb{R}^3} e^{ix\cdot\eta}e^{-its\lambda_{m}(\eta)}\chi_{N_2}(\eta)\Pi_{s}^{m}(\eta)\hat{\Psi}(0,\eta)d\eta.
\end{equation*}
Theorem~1.2 concerns the same-branch terms with $\Gamma=\Gamma_5$.

\subsection{\texorpdfstring{$\Gamma=\Gamma_5$}{Gamma = Gamma5} case}
\subsubsection{The pseudoscalar same-branch interactions}
Recall that $\Gamma_{5}:=\beta\gamma^5$. Since $\Gamma_5$ anticommutes with both $\alpha^j$ and $\beta$, it anticommutes with the full massive symbol:
\begin{equation*}
     \Gamma_5 H_m(\xi)=-H_{m}(\xi)\Gamma_5
\end{equation*}
It follows from (\ref{projection}) that
\begin{equation}
   \Gamma_5\Pi_{s}^{m}(\xi)=\Pi_{-s}^{m}(\xi)\Gamma_5
\end{equation}
Hence
\begin{equation}\label{operator-estimate-3}
   \Pi_{s}^{m}(\eta)\Gamma_5\Pi_{s}^{m}(\xi)=\Pi_{s}^{m}(\eta)\Pi_{-s}^{m}(\xi)\Gamma_5
   =\big( \Pi_{s}^{m}(\eta)-\Pi_{s}^{m}(\xi)\big) \Pi_{-s}^{m}(\xi)\Gamma_5
\end{equation}
Thus the same-branch interaction is controlled by the variation of the massive spectral projection.

\medskip

Set $A_{m}(\xi):=\frac{H_{m}(\xi)}{\lambda_{m}(\xi)}$. Then $\big( A_{m}(\xi)\big)^2=I$ and 
\begin{equation*}
  \Pi_{s}^{m}(\eta)-\Pi_{s}^{m}(\xi)=\frac{s}{2}\big(A_{m}(\eta)-A_{m}(\xi)\big).
\end{equation*}
Moreover, we have
\begin{equation*}
  \Big(A_{m}(\eta)-A_{m}(\xi)\Big)^2=A_{m}^{2}(\xi)+A_{m}^{2}(\eta)-\Big(A_{m}(\eta)A_{m}(\xi)+A_{m}(\xi)A_{m}(\eta)\Big)
\end{equation*}
\begin{align*}
   A_{m}^2(\xi)&=\frac{(\alpha\cdot\xi+m\beta)^2}{\lambda_{m}^2(\xi)}\\
  & =\frac{\sum_{i,j}\xi_{i}\xi_{j}\alpha^{i}\alpha^{j}+m\beta(\sum_{j}\xi_{j}\alpha^{j})+(\sum_{i}\xi_{i}\alpha^{i})m\beta+m^2\beta^2}{|\xi|^2+m^2}\\
   &=\frac{\sum_{i}|\xi_i|^{2}+m^2}{|\xi|^2+m^2}I=I
\end{align*}
\begin{align*}
  A_{m}(\xi) A_{m}(\eta)&+A_{m}(\eta)A_{m}(\xi)\\
   &=\frac{(\alpha\cdot\xi+m\beta)(\alpha\cdot\eta+m\beta)+(\alpha\cdot\eta+m\beta)(\alpha\cdot\xi+m\beta)}{\lambda_{m}(\xi)\lambda_{m}(\eta)}\\
   &=\frac{2\sum_{i}\xi_{i}\eta_{i}\alpha^i\alpha^i+2m^2\beta^2}{\lambda_{m}(\xi)\lambda_{m}(\eta)}I
   =\frac{2(\xi\cdot\eta+m^2)}{\lambda_{m}(\xi)\lambda_{m}(\eta)}I
\end{align*}
Consequently, 
\begin{equation}\label{operator-norm-1}
              \big\| \Pi_{s}^{m}(\eta)-\Pi_{s}^{m}(\xi)\big\|^2=\frac{1}{2}\underbrace{\left(1-\frac{\xi\cdot\eta+m^2}{\lambda_{m}(\xi)\lambda_{m}(\eta)}\right)}_{\text{RHS}}
\end{equation}
Let $\theta=\angle(\xi,\eta)$. The expression on the right-hand side can be written as 
\begin{equation}\label{operator-norm-2}
   \text{RHS}=\frac{|\xi||\eta|(1-\cos\theta)}{\lambda_{m}(\xi)\lambda_{m}(\eta)}+\frac{m^2 (|\xi|-|\eta|)^2}{\lambda_{m}(\xi)\lambda_{m}(\eta)\big(\lambda_{m}(\xi)\lambda_{m}(\eta)+|\xi||\eta|+m^2\big)}.
\end{equation}
Suppose now that $|\xi|\sim |\eta|\sim K$. We obtain from (\ref{operator-norm-1}) and (\ref{operator-norm-1}) that
\begin{equation}
        \big\| \Pi_{s}^{m}(\eta)-\Pi_{s}^{m}(\xi)\big\| \lesssim \frac{K}{\sqrt{K^2+m^2}}\sqrt{1-\cos\theta}+\frac{m}{K^2+m^2}\big| |\xi|-|\eta|\big|.
\end{equation}
In the low-frequency regime $K\ll m$, this gives the uniform estimate
\begin{equation}\label{operator-estimate-4}
    \big\| \Pi_{s}^{m}(\eta)-\Pi_{s}^{m}(\xi)\big\| \lesssim \frac{K}{m}.
\end{equation}
Combing (\ref{operator-estimate-3}) and (\ref{operator-estimate-4}), we conclude that whenever $|\xi|\sim|\eta|\sim K\ll m$, we have
\begin{equation}
    \|\Pi_{s}^{m}(\eta)\Gamma_5\Pi_{s}^{m}(\xi)\|\lesssim  \frac{K}{m}.
\end{equation}

\bigskip

We first consider the same-branch interactions $B_{\Gamma_5}(\Phi_{K,s}, \Psi_{K,s})$. In fact
\begin{align*}
     &B_{\Gamma_5}(\Phi_{K,s}, \Psi_{K,s})=\langle \Gamma_5\Phi_{K,s}, \Psi_{K, s} \rangle=(\Gamma_5\Phi_{K,s})^{*} \Psi_{K, s}\\
     &=\frac{1}{(2\pi)^{6}}\int_{\mathbb{R}^3}\int_{\mathbb{R}^3} e^{ix\cdot(\eta-\xi)}e^{it(\lambda_{m}(\xi)-\lambda_{m}(\eta))}
      \cdot \big\langle \Gamma_{5} \Pi_{s}^{m}(\xi)\hat{\Phi}_{K,s}(0,\xi), \Pi_{s}^{m}(\eta)\hat{\Psi}_{K, s}(0,\eta)\big\rangle d\xi d\eta \\
     &=\frac{1}{(2\pi)^{6}}\int_{\mathbb{R}^3}\int_{\mathbb{R}^3} e^{ix\cdot(\eta-\xi)} \big\langle \Gamma_{5} \Pi_{s}^{m}(\xi)\hat{\Phi}_{K,s}(t,\xi), \Pi_{s}^{m}(\eta)\hat{\Psi}_{K, s}(t,\eta)\big\rangle d\xi d\eta\\
     &=\frac{1}{(2\pi)^{6}}\int_{\mathbb{R}^3}\int_{\mathbb{R}^3} e^{ix\cdot(\eta-\xi)} \big\langle \Pi_{s}^{m}(\eta)\Gamma_{5} \Pi_{s}^{m}(\xi)\hat{\Phi}_{K,s}(t,\xi), \hat{\Psi}_{K, s}(t,\eta)\big\rangle d\xi d\eta
\end{align*}
Then the Fourier transform of the bilinear form satisfies
\begin{equation*}
     \big|\mathcal{F}\big\{B_{\Gamma_5}(\Phi_{K,s}, \Psi_{K, s})(t,\zeta)\big\}\big|\lesssim \frac{K}{m}\int_{\mathbb{R}^3} |\hat{\Phi}_{K,s}(t,\xi)||\hat{\Psi}_{K,s}(t, \xi+\zeta)|d\xi.
\end{equation*}
Plancherel's theorem and convolution Young inequality therefore give
\begin{equation}\label{Plancherel-estimate}
        \|B_{\Gamma_5}(\Phi_{K,s}, \Psi_{K, s})(t)\|_{L_x^2}\lesssim \frac{K}{m}\|\hat{\Phi}_{K, s}\|_{L_{\xi}^{2}}\|\hat{\Psi}_{K, s}\|_{L_\xi^1}\lesssim \frac{K^{5/2}}{m}\|\Phi_{K, s}\|_{L_x^2}\|\Psi_{K, s}\|_{L_x^2}.
\end{equation}
Here we used the fact that the Fourier support of $\Phi_{K, s}$ has volume $O(K^3)$. The free evolution is unitary on $L_x^2$, and hence
\begin{equation*}
     \|\Phi_{K, s}(t)\|_{L_x^2}=\|\Phi_{K,s}(0)\|_{L_x^2}, \quad \|\Psi_{K, s}(t)\|_{L_x^2}=\|\Psi_{K, s}(0)\|_{L_x^2}.
\end{equation*}
Integrating (\ref{Plancherel-estimate}) in time, we obtain
\begin{equation}\label{same-brach-estimate-1}
       \|B_{\Gamma_5}(\Phi_{K, s}, \Psi_{K, s})(t)\|_{L_{t,x}^2}\lesssim  T^{1/2} \frac{K^{5/2}}{m}\|\Phi_{K, s}(0)\|_{L_x^2}\|\Psi_{K, s}(0)\|_{L_x^2}.
\end{equation}
On the other hand, the null-form estimate proved in Section 4 gives
\begin{equation}\label{same-brach-estimate-2}
 \|B_{\Gamma_5}(\Phi_{K, s}, \Psi_{K, s})(t)\|_{L_{t,x}^2}\lesssim K\sqrt{2+mT}\|\Phi_{K, s}(0)\|_{L_x^2}\|\Psi_{K, s}(0)\|_{L_x^2}.
\end{equation}
Taking the minimum of (\ref{same-brach-estimate-1}) and (\ref{same-brach-estimate-2}), we conclude that 
\begin{equation*}
    \|B_{\Gamma_5}(\Phi_{K, s}, \Psi_{K, s})(t)\|_{L_{t,x}^2}\lesssim \min\left\{ K\sqrt{2+mT}, \frac{T^{1/2}K^{5/2}}{m}\right\}\|\Phi_{K, s}(0)\|_{L_x^2}\|\Psi_{K, s}(0)\|_{L_x^2}.
\end{equation*}
This proves Theorem~1.2.

\subsubsection{Comparison with the other interactions}
The preceding refinement is specific both to the pseudoscalar matrix and to the same energy branch. Indeed, for the opposite-branch interactions,
\begin{equation*}
    \Pi_{-s}^{m}(\eta)\Gamma_5\Pi_{s}^{m}(\xi)=\Pi_{-s}^{m}(\eta)\Pi_{-s}^{m}(\xi)\Gamma_{5}.
\end{equation*}
When $\eta=\xi$, this expression does not vanish. Consequently, no uniform factor $K/m$ is available for the opposite-branch pseudoscalar interaction.

\subsection{\texorpdfstring{$\Gamma=\beta$}{Gamma = beta} case}
The scalar matrix $\beta$ behaves differently. A direct calculation gives 
\begin{align*}
  \beta\Pi_{s}^{m}(\xi)&=\frac{1}{2}\Big(\beta+s\beta\frac{\alpha\cdot\xi+m\beta}{\lambda_{m}(\xi)}\Big)=\frac{1}{2}\Big(I-s\frac{\alpha\cdot\beta+m\beta}{\lambda_{m}(\xi)}\Big)\beta+s\frac{mI}{\lambda_{m}(\xi)}
  =\Pi_{-s}^{m}(\xi)\beta+\frac{m}{\lambda_{m}(\xi)}I
\end{align*}
Multiplying by $\Pi_{s}^{m}(\xi)$ on the left, we obtain
\begin{equation*}
     \Pi_{s}^{m}(\xi)\beta\Pi_{s}^{m}(\xi)=s\frac{m}{\lambda_{m}(\xi)}\Pi_{s}^{m}(\xi).
\end{equation*}
For $|\xi|\sim K\ll m$,
\begin{equation*}
    \frac{m}{\lambda_{m}(\xi)}\sim 1.
\end{equation*}
Thus the scalar same-branch interaction contains a non-vanishing leading term and does not gain the factor $\frac{K}{m}$. This explains the distinguish between the pseudoscalar and scalar channels stated in the introduction.

\bibliographystyle{plain}
\bibliography{references}

% -------------------------------------------------
\section*{Acknowledgments}
The author would like to thank Professor Yi Zhou for his academic lecture on the div-curl lemma.
% -------------------------------------------------

% -------------------------------------------------
% Bibliography
% -------------------------------------------------

% Option 1: BibTeX (recommended for a full paper).
% Create a file named references.bib, then uncomment the next two lines.
%
% \bibliographystyle{amsplain}
% \bibliography{references}

% Option 2: Write references directly in this file.
% Uncomment and edit the following block if preferred.
%
% \begin{thebibliography}{99}
%
% \bibitem{WangZhou}
% S.~Wang and Y.~Zhou,
% \emph{Title of the paper},
% Journal Name \textbf{volume} (year), pages.
%
% \end{thebibliography}

\end{document}